\title{Graded Tambara functors}
\author{Vigleik Angeltveit}
\address{Mathematical Sciences Institute \\
Australian National University \\
Canberra, ACT 0200 \\
Australia}
\author{Anna Marie Bohmann}
\address{Department of Mathematics\\
Vanderbilt University\\
1326 Stevenson Center\\
Nashville, TN\\
USA}
\newtheorem{theorem}{Theorem}[section]
\newtheorem{thm}[theorem]{Theorem}
\newtheorem{lemma}[theorem]{Lemma}
\newtheorem{prop}[theorem]{Proposition}
\theoremstyle{definition}
\newtheorem{defn}[theorem]{Definition}
\newtheorem{remark}[theorem]{Remark}
\newtheorem{example}[theorem]{Example}
\newtheorem{notation}[theorem]{Notation}
\let\c@equation\c@theorem
\numberwithin{equation}{section}
 \newcommand{\cB}{\mathcal{B}} \newcommand{\cC}{\mathcal{C}}  \newcommand{\cE}{\mathcal{E}} \newcommand{\cF}{\mathcal{F}}   \newcommand{\cI}{\mathcal{I}} \newcommand{\cJ}{\mathcal{J}}     \newcommand{\cO}{\mathcal{O}} 
    \newcommand{\cU}{\mathcal{U}}     
 \newcommand{\bR}{\mathbb{R}}        \newcommand{\bZ}{\mathbb{Z}}
\newcommand{\Top}{\mathcal{T}} 
\newcommand{\sma}{\wedge} 
\newcommand{\inv}{{-1}}
\newcommand{\xto}{\xrightarrow}
\newcommand{\xfrom}{\xleftarrow}
\newcommand{\To}{\Rightarrow}
\newcommand{\Too}{\Longrightarrow}
\newcommand{\xToo}[1]{\xRightarrow{#1}}
\DeclareMathOperator*{\colim}{\textnormal{colim}}
\DeclareMathOperator{\Fun}{Fun}
\newcommand{\Ab}{\mathit{Ab}}
\newcommand{\RO}{\mathcal{RO}}
\newcommand{\fancyI}{{}\widehat{\cI}} 
\newcommand{\Spec}{\mathit{Sp}} 
\newcommand{\hoSpec}{ho\mathit{Sp}} 
\newcommand{\hoGSpec}{hoG\text{-}\mathit{Sp}}
\newcommand{\op}{{\mathrm{op}}}
\newcommand{\const}{\mathrm{const}}
\newcommand{\id}{\mathrm{id}}
\newcommand{\upis}{\underline{\pi}_\star}
\newcommand{\constant}{\textnormal{const}}
\newcommand{\Tambara}{\textnormal{Tamb}}
\newcommand{\Mackey}{\textnormal{Mack}}
\newcommand{\otherE}{\cE}
\newcommand{\Set}{\mathit{Set}}
\begin{document}

\begin{abstract}
We define the notion of an $\RO(G)$-graded Tambara functor and prove that any $G$-spectrum with \emph{norm multiplication} gives rise to such an $\RO(G)$-graded Tambara functor. 
\end{abstract}

\maketitle

\section{Introduction}

Let $G$ be a finite group.  The basic algebraic concept arising in $G$-equivariant homotopy theory is that of a \emph{$G$-Mackey functor}.  Mackey functors are well studied and their important role in equivariant homotopy theory has been documented since the 1970s---see, e.g.\,, \cite{Dr71} or \cite{tD73}.  For example, the appropriate $G$-equivariant version of cohomology with coefficients in an abelian group is cohomology with coefficients in a $G$-Mackey functor.  Such a cohomology theory is represented by a $G$-equivariant Eilenberg--MacLane spectrum.

A commutative multiplication on this type of $G$-cohomology theory produces a more complicated algebraic structure, called a \emph{Tambara functor.}  This type of structure arises from a commutative $G$-equivariant Eilenberg--MacLane ring spectrum \cite{Ull13}.  Compared with Mackey functors, Tambara functors have additional structure.  Loosely speaking, a Mackey functor $M$ consists of an abelian group $M(G/H)$ for each subgroup $H\leq G$, together with restriction and transfer maps between these groups that satisfy certain relations.  A Tambara functor has an additional type of map, called a ``norm map,'' relating the groups $M(G/H)$.  One can think of restriction as an equivariant version of a diagonal map, transfer as an equivariant version of addition, and norm as an equivariant version of multiplication.

The theory of Tambara functors, which was first introduced by Tambara in \cite{Ta93} under the name of TNR-functors, is not as well developed as that of Mackey functors. Brun \cite{Br07} and Strickland \cite{St} both discuss  Tambara functors with an eye towards homotopy theory.  Both prove that the zeroth homotopy groups of an $E_\infty$-ring spectrum form a Tambara functor.  However, neither work considers the algebraic structure present on the homotopy groups in nonzero grading.  The main goal of the present work is to understand this structure, which is that of a \emph{graded} Tambara functor.  Thus, our results can be thought of as generalizing Strickland's and Brun's work.   At the Mackey functor level, the analogous results are well known, but our work requires a refinement of the existing treatment of \emph{graded} Mackey functors. To set the stage for our graded Tambara functors we define the notion of an $\RO(G)$-graded Mackey functor, and prove that any $G$-spectrum $E$ determines such an $\RO(G)$-
graded Mackey functor in Theorem \ref{t:Mackey}. Here $\RO(G)$ is a categorification of the real representation ring of $G$, and our first task is to define $\RO(G)$ carefully.

Additionally, the literature  suggests that $E$ has to be a $G$-equivariant $E_\infty$ ring spectrum in order for its homotopy groups to define a Tambara functor, see e.g.\ the first paragraph on p.\ 235 of \cite{Br05}.
This condition seems stronger than necessary, as in the case $G=\{e\}$ a homotopy associative and commutative multiplication clearly suffices to give $\pi_* E$ the structure of a graded commutative ring. We remedy this situation by defining the notion of a \emph{norm multiplication} on a $G$-spectrum, and prove in Theorem \ref{t:Tambara} that if $E$ has a norm multiplication then its $\RO(G)$-graded homotopy groups constitute a graded Tambara functor.

\subsection{Statement of results}
Our first contribution is a precise definition of the categorified representation ring. Given a finite $G$-set $X$, we make the following definition.

\begin{defn}
Let $\RO(G)(X)$ denote the category $Fun(\cB_G X, \fancyI^\op)$ of functors from the translation category $\cB_G X$ to the Grayson-Quillen construction on the category of finite dimensional real inner product spaces. The morphisms are natural transformations.
\end{defn}

As a first step we define a category $\RO(G)$ by declaring a morphism from $(X,\chi)$ to $(Y,\gamma)$ to be a pair $(f,\tilde{f})$ where
\[
 f\colon X \to Y
\]
is an isomorphism of $G$-sets and
\[
 \tilde{f}\colon \chi \Rightarrow \gamma \circ f
\]
is a natural transformation of functors from $\cB_G X$ to $\fancyI^\op$.

We go on to define a category $\RO(G)^\Mackey$ by adding restriction and transfer maps to $\RO(G)$, and define an $\RO(G)$-graded Mackey functor to be a functor $\RO(G)^\Mackey \to \Ab$ to the category of abelian groups. The following result is also restated as Theorem \ref{t:Mackey_restated}:

\begin{thm} \label{t:Mackey}
Let $E$ be an orthogonal $G$-spectrum. Then $E$ determines an $\RO(G)$-graded Mackey functor
\[
 \upis(E)\colon \RO(G)^\Mackey \to \Ab.
\]
\end{thm}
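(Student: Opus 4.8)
The plan is to construct the functor $\upis(E)\colon \RO(G)^{\Mackey} \to \Ab$ by specifying its value on objects and on the three types of generating morphisms—isomorphisms in $\RO(G)$, restrictions, and transfers—and then verifying that it respects the relations that define $\RO(G)^{\Mackey}$. On an object $(X,\chi)$ with $X$ a finite $G$-set and $\chi$ a functor $\cB_G X \to \fancyI^{\op}$, I would first translate $\chi$ into an honest (virtual) $G$-representation: the Grayson--Quillen construction records a formal difference $V \ominus W$ of inner product spaces over each point, and naturality over the translation category $\cB_G X$ assembles these into $G$-vector bundles over $X$, i.e.\ into a virtual representation of the stabilizer groups. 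For $X = G/H$ this is exactly an element of $\RO(H)$, and I would set $\upis(E)(G/H,\chi) = \pi_0^H\bigl(S^{-V_\chi} \sma E\bigr) = [S^{V_\chi}, E]^H$, the $V_\chi$-graded equivariant homotopy group; for general $X = \coprod G/H_i$ one takes the product over orbits. (One should check this is independent of the choice of model for the virtual representation, which is where the Grayson--Quillen bookkeeping pays off.)

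Next I would define the functor on morphisms. An isomorphism $(f,\tilde f)\colon (X,\chi) \to (Y,\gamma)$ in $\RO(G)$ consists of a $G$-isomorphism $f$ and a natural transformation $\tilde f\colon \chi \To \gamma\circ f$; the map $f$ induces a homeomorphism of the underlying $G$-sets and $\tilde f$, being a transformation in $\fancyI^{\op}$, induces a stable equivalence (an explicit zig-zag of suspensions by trivial summands and linear isomorphisms) between the relevant representation spheres, hence an isomorphism on the homotopy groups above. This is functorial because composition in $\RO(G)$ is defined by pasting natural transformations, which matches composition of the induced sphere equivalences. The restriction maps come from restricting the index of the homotopy group along $H' \leq H$—equivalently from the projection $G/H' \to G/H$ of $G$-sets applied to $E$—and the transfer maps come from the Wirthm\"uller/dimension-shifting transfer $\Sigma^\infty_G G/H'_+ \to \Sigma^\infty_G G/H_+$, both of which are standard operations on $\pi_\star^G E$ already present in the ungraded Mackey functor $\upis[0]{E}$.

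The real content is then checking that these assignments are compatible with the defining relations of $\RO(G)^{\Mackey}$: the Mackey double-coset formula relating composites of restrictions and transfers, and—this is the part genuinely beyond the ungraded case—the interaction of restrictions and transfers with the grading morphisms $(f,\tilde f)$. Concretely, one must show that for each relation in $\RO(G)^{\Mackey}$ the corresponding diagram of homotopy groups commutes, where the commutativity now involves coherently tracking the identifications of representation spheres supplied by the natural transformations $\tilde f$. I expect the main obstacle to be exactly this coherence: one must verify that the isomorphisms of spheres induced by different composable natural transformations agree on the nose (not just up to homotopy) on homotopy groups, and that the transfer commutes with suspension by a representation in the way dictated by the naturality square in $\cB_G X$. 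My strategy for this is to reduce everything to the orbit case $X = G/H$—where $\RO(G)(G/H)$ is equivalent to a skeleton with a single object per virtual representation and the morphisms are controlled—and there to use the standard fact that smashing with $E$ is a functor on the stable homotopy category, so that any two stable maps built from the same linear-isometry data induce the same map on $\pi_\star$. With the orbit case in hand, the general case follows by taking products and using naturality of the transfer with respect to maps of $G$-sets. Once all relations are verified, the universal property of $\RO(G)^{\Mackey}$ as the category generated by $\RO(G)$ together with restrictions and transfers subject to these relations gives the desired functor $\upis(E)$.
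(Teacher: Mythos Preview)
Your proposal is plausible in outline but takes a genuinely different route from the paper, and in doing so you create extra work and rely on a presentation of $\RO(G)^\Mackey$ that is not actually available.

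The paper does \emph{not} decompose into orbits or invoke a generators-and-relations description. Instead it works uniformly with the functor categories $Fun(\cB_G X,\Spec)$: the value on $(X,\chi)$ is defined as $[p_*^\vee S^\chi, E]_G \cong [S^\chi,\const_X(E)]_{\cB_G X}$, restriction $(r,\tilde r)^*$ is precomposition with $S^\alpha \Rightarrow r^*S^\chi$, and transfer $(t,\tilde t)_*^\vee$ is precomposition with an explicit Pontryagin--Thom collapse $S^\gamma \Rightarrow t_*^\vee S^\alpha$ (this is why the paper's definition of $\RO(G)^T$ demands that each $\tilde t_a$ be an \emph{isomorphism} in $\fancyI^\op$ and that an equivariant embedding exist). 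Since $\RO(G)^\Mackey$ is defined as a span category with composition by pullback, the entire proof reduces to one verification: for a pullback square in $\RO(G)^\Mackey$, the two composites $(r,\tilde r)^*\circ(t,\tilde t)_*^\vee$ and $(t',\tilde t')_*^\vee\circ(r',\tilde r')^*$ agree. The paper checks this by writing both as chains of natural transformations $S^\beta \Rightarrow \cdots \Rightarrow \const_B(E)$ and observing that $r^* t_*^\vee$ and $(t')_*^\vee (r')^*$ are naturally isomorphic on both $S^\alpha$ and $\const_A(E)$, with the intermediate maps matching.

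By contrast, your plan to pass to $RO(H)$-graded homotopy groups on orbits and then assemble reintroduces exactly the choices (coset representatives, basepoints of orbits) the categorified $\RO(G)$ was designed to avoid, and forces you to track conjugations when gluing. More seriously, your final step appeals to ``the universal property of $\RO(G)^{\Mackey}$ as the category generated by $\RO(G)$ together with restrictions and transfers subject to these relations''---but no such presentation is established; the category is defined directly via spans and pullback composition, so the relation you must verify is the single pullback compatibility above, not a list including the double-coset formula (which the paper derives as a \emph{consequence}). Your sketch also omits the Pontryagin--Thom construction entirely, and the embedding/isomorphism conditions on $\tilde t$ that make it possible; the phrase ``Wirthm\"uller/dimension-shifting transfer'' is in the right neighborhood but does not by itself give the graded map you need.
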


Next we define a category $\RO(G)^\Tambara$ by also adding norm maps to $\RO(G)$, and define an $\RO(G)$-graded Tambara functor to be a functor $\RO(G)^\Tambara \to \Ab$. We then pin down the exact amount of multiplicative structure $E$ needs in order for $\upis(E)$ to determine an $\RO(G)$-graded Tambara functor.

\begin{defn}
A \emph{norm multiplication} on a $G$-spectrum $E$ is a natural transformation
\[
 \mu_A^B\colon n_*^\sma \constant_A(E) \Rightarrow \constant_B(E)
\]
in the homotopy category of functors $\cB_G B \Rightarrow \Spec$ for each $n\colon A \to B$ satisfying the properties listed in Definition \ref{d:normmult} below.
\end{defn}

For the definition of $n_*^\sma$ see Section \ref{ss:wedgeandsmash}. A norm multiplication on $E$ amounts to a map $N_H^K E \to E$ for each $H \leq K$, where $N_H^K$ is the Hill--Hopkins--Ravenel norm \cite{HHR}, together with a map $E \sma E \to E$. These then have to satisfy certain compatibility axioms. If $E$ is a commutative orthogonal $G$-spectrum then the ring structure determines a norm multiplication, but our conditions are weaker. If $G=\{e\}$ is the trivial group a norm multiplication is the same as a multiplication map that is homotopy unital, associative and commutative. The following result is also restated as Theorem \ref{t:Tambara_restated}.

\begin{thm} \label{t:Tambara}
Let $E$ be an orthogonal $G$-spectrum with a norm multiplication. Then $E$ determines an $\RO(G)$-graded Tambara functor
\[
 \upis(E)\colon \RO(G)^\Tambara \to \Ab.
\]
\end{thm}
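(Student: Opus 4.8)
The plan is to bootstrap Theorem~\ref{t:Tambara} from Theorem~\ref{t:Mackey} by checking that the norm maps already present in $\RO(G)^\Tambara$ are sent to well-defined homomorphisms of $\RO(G)$-graded abelian groups and that the resulting functor respects the relations in $\RO(G)^\Tambara$. Since $\upis(E)$ is already a functor on $\RO(G)^\Mackey$, the content is entirely about the extra norm morphisms. First I would unwind what a morphism in $\RO(G)^\Tambara$ that is a norm map looks like: for $H \leq K \leq G$ it is modeled, on the level of indexing $G$-sets, by a map $n\colon A \to B$ of finite $G$-sets with fibers of a prescribed shape, together with the grading data, and the claim to verify is that such an $n$ induces a map $\upis(E)$ of the associated graded pieces. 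The key geometric input is the Hill--Hopkins--Ravenel norm $N_H^K$: given a class $x \in \pi_V^H(E) = [\,S^V, E\,]^H$, one forms $N_H^K(x)\colon N_H^K S^V \to N_H^K E$ and postcomposes with the structure map $\nu\colon N_H^K E \to E$ of the norm multiplication. The target $N_H^K S^V$ is a representation sphere $S^{\Ind_H^K V}$, so this produces an element of $\pi^K_{\Ind_H^K V}(E)$, and more generally, using the functor $n_*^\sma$ from Section~\ref{ss:wedgeandsmash} and the natural transformation $\mu_A^B$, one produces the required map on each graded piece indexed by $\cB_G B$.

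The second step is to verify that these norm maps are homomorphisms on the relevant graded pieces — but crucially they are \emph{not} additive; rather, they are exponential/multiplicative, and additivity over a sum in the source corresponds to the universal distributivity formula that is built into the definition of $\RO(G)^\Tambara$ as morphisms generated by restriction, transfer, and norm. So what must actually be checked is: (i) norm of a composite $A \to B \to C$ equals the composite of norms, which follows from the corresponding property of $n_*^\sma$ together with axiom in Definition~\ref{d:normmult} asserting $\mu^C_A = \mu^C_B \circ (\text{norm of }\mu^B_A)$; (ii) compatibility of norm with restriction along a pullback square (a ``base-change'' or ``Mackey double coset'' formula for norms) — this comes from the behavior of $n_*^\sma$ on pullbacks of $G$-sets, which is part of the setup in Section~\ref{ss:wedgeandsmash}; and (iii) the distributivity relation expressing $n_*(x + y)$ (and $n_*\circ \text{tr}$) in terms of a transfer of a norm of a restriction, indexed by the exponential diagram associated to $n$. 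For (iii) the key point is that the exponential diagram of finite $G$-sets, after applying $\constant_{(-)}(E)$ and the maps $n_*^\sma$, $n_*^\vee$, becomes a commuting diagram in the homotopy category of functors to $\Spec$ — this is precisely the distributivity axiom imposed in Definition~\ref{d:normmult}, and the verification is a diagram chase transporting that axiom through the identification $\upis(E)(X,\chi) \cong [\,\chi, \constant_X(E)\,]$.

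Concretely, the argument proceeds in the order: (1) fix the presentation of $\RO(G)^\Tambara$ by generating morphisms and relations — restriction, transfer, norm, together with functoriality of each, the interchange (double coset) relations among pairs of these, and the distributivity relation; (2) reconstruct the transfer and restriction part from Theorem~\ref{t:Mackey}, so only the norm generators and the relations involving a norm remain; (3) define $\upis(E)$ on a norm generator $n\colon A \to B$ by the composite $[\,\chi, \constant_A(E)\,] \xto{(n_*^\sma)_*} [\,n_*^\sma\chi, n_*^\sma\constant_A(E)\,] \xto{(\mu_A^B)_*} [\,n_*^\sma\chi, \constant_B(E)\,]$, checking this lands in the correct graded piece by identifying $n_*^\sma\chi$ with $\gamma$ via the grading data of the morphism; (4) verify functoriality of $n \mapsto \upis(E)(n)$ on norms from the associativity axiom of $\mu$ and functoriality of $n_*^\sma$; (5) verify each mixed relation by pasting the corresponding homotopy-commutative diagram of functors and applying $[\,{-}, {-}\,]$; the distributivity relation (3) above is the one that genuinely uses the full strength of Definition~\ref{d:normmult}. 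I expect step (5), and within it the distributivity relation, to be the main obstacle: one must carefully match Tambara's exponential diagram of $G$-sets against the homotopy-commutative diagram that the norm-multiplication axioms provide, keeping track of all the grading twists $\chi$, and confirm that passing to $\pi_\star$ turns the homotopical distributivity into the algebraic identity $n_*(x+y) = n_*(x) + (\text{correction terms})$ demanded by a Tambara functor. The remaining relations, by contrast, are formal consequences of functoriality of $n_*^\sma$ and $n_*^\vee$ together with their interaction on pullback squares, which are recorded in Section~\ref{ss:wedgeandsmash}.
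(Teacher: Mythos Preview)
Your proposal matches the paper's approach closely: the paper's proof of Theorem~\ref{t:Tambara_restated} also reduces, after invoking the Mackey structure from Theorem~\ref{t:Mackey}, to checking exactly the two mixed relations you call (ii) and (iii)---norm past restriction via a pullback square, and norm past transfer via the exponential diagram---and handles each by writing out both composites as strings of natural transformations and comparing them via three observations (an isomorphism of functors, agreement of the maps into it, agreement of the maps out to $\const(E)$). Your step (i), functoriality of the norm alone, is indeed immediate from condition~(1) of Definition~\ref{d:normmult} and the paper does not belabor it.

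There is one misconception worth flagging. You write that for (iii) ``this is precisely the distributivity axiom imposed in Definition~\ref{d:normmult},'' but that definition contains no distributivity axiom: its only conditions are the composition law~(1) and stability under pullback~(2). The distributivity you need is not imposed on $E$; it is the general fact that smash distributes over wedge in spectra, which yields the natural isomorphism
\[
 n_*^\sma t_*^\vee S^\xi \;\cong\; (t')_*^\vee (n')_*^\sma r^* S^\xi
\]
(and likewise with $\const_C(E)$ in place of $S^\xi$), indexed precisely by the exponential diagram of finite $G$-sets. What the norm multiplication must supply is only that the two resulting maps down to $\const_B(E)$ agree, and the paper extracts this from condition~(2), stability under pullback: for each fixed section $s$ indexing a wedge summand of $D$, the restriction of $n'$ over that summand is a pullback of $n$, so $\mu_{\cE}^D$ on that summand is $\mu_A^B$. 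So in your step~(5) the ingredient to invoke is pullback stability together with smash/wedge distributivity, not a separate distributivity hypothesis on $E$; once you make that substitution your outline goes through exactly as the paper's proof does.
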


\subsection{Acknowledgments}
The authors would like to thank Mike Hill for interesting conversations about equivariant stable homotopy theory. This project was started after Discovery Grant No.\ DP120101399 from the Australian Research Council paid for the second author to visit the first author. We would also like to thank MSRI for hosting both of us during the Algebraic Topology program where some of this work was carried out.  This material is based on work partially supported by the National Science Foundation under Grant No.\ 0932078 000 while the authors were in residence at the Mathematical Sciences Research Institute in Berkeley, California, during the Spring 2014 semester.  The authors also thank an anonymous referee for comments which improved and clarified this work.

\section{Orthogonal $G$-spectra} \label{s:orthogonalspectra}
\subsection{Orthogonal spectra}
We will give a very brief review of orthogonal spectra; see e.g.\ \cite{MM02} or \cite[Appendix A and B]{HHR} for a more complete description. Recall that we can define an orthogonal spectrum as follows. Let $\cI$ denote the category of finite dimensional real inner product spaces and linear isometric embeddings. Then an orthogonal spectrum is a functor $E\colon \cI \to \Top$ to the category of based spaces which is in addition a module over the sphere spectrum. A map of orthogonal spectra is a natural transformation of functors. We denote the category of orthogonal spectra by $\Spec$.

To spell this out, an orthogonal spectrum $E$ consists of a based space $E(V)$ with an $O(V)$-action for each finite dimensional real inner product space $V$, together with structure maps $S^V \sma E(W) \to E(V \oplus W)$ which are $O(V) \times O(W)$-equivariant. Note that given $E(V)$ and some $V' \in \cI$ with $V \cong V'$, $E(V')$ is determined up to homeomorphism by $E(V)$.

A map $f\colon E \to E'$ of orthogonal spectra is a map $f(V)\colon E(V) \to E'(V)$ for each $V$ which commutes with all the structure maps.

There is a neat way to package a functor $E\colon \cI \to \Top$ which is also a module over the sphere spectrum, namely as a functor $\cJ \to \Top$. Here $\cJ$ is the category with the same objects as $\cI$, and $\cJ(V,U)$ is defined as follows. Consider the space $\cI(V,U)$ of linear isometric embeddings. It has a normal bundle, and $\cJ(V,U)$ is the Thom space of this normal bundle. In symbols we have
\[
 \cJ(V,U) = \{(f,u) \mid f\colon V \to U, \,\,\, u \in U-f(V) \} \cup \{\infty\}.
\]

\begin{example}
Let $V$ be a finite dimensional real inner product space and let $X$ be a based space. Then there is an orthogonal spectrum $\cF_V(X)$ defined by
\[
 \cF_V(X)(U) = \cJ(V,U) \sma X.
\]
The functor $\cF_V(-)$ is left adjoint to the functor $ev_V\colon \Spec \to \Top$ which picks out the $V$'th space. If $V=0$ then $\cF_V(X)$ is the usual suspension spectrum of $X$.
\end{example}

The spectrum $\cF_V(S^V)$ plays an important role. We construct the (positive) stable model structure on $Sp$ from the (positive) level model structure by formally declaring the maps $\cF_V(S^V) \to S=\cF_0(S^0) = \Sigma^\infty S^0$ to be weak equivalences. 

\subsection{Equivariant orthogonal spectra}\label{subsec:equivorthospec}
Now let $G$ be either a finite group or a compact Lie group. (Later we will restrict our attention to finite groups.) We define a $G$-equivariant orthogonal spectrum simply as an orthogonal spectrum $E$ with a $G$-action, and we denote the category of $G$-equivariant orthogonal spectra by $G\text{-}\Spec$.

To the reader who is used to dealing with universes this might seem like a naive definition, but by varying the model structure we recover both the naive and genuine homotopy categories. The point is that if $V$ is an $n$-dimensional $G$-representation we can define $E(V)$ by
\[
 E(V) = \cI(\bR^n, V)_+ \sma_{O(n)} E(\bR^n),
\]
with $G$ acting on the target of $\cI(\bR^n, V)$ and diagonally on the smash product.

This is non-equivariantly homeomorphic to $E(\bR^n)$. Given a universe $\cU$ we can define the homotopy groups of $E$ by taking a colimit over the finite-dimensional subspaces of $\cU$. For example, 
\[
 \pi_0^G E = \colim_V [S^V, E(V)]_G.
\]

A map of $G$-spectra is a weak equivalence if it induces an isomorphism of $H$-equivariant homotopy groups for each closed $H \leq G$. When discussing homotopy classes of maps between orthogonal $G$-spectra we  always work with the model structure corresponding to a complete $G$-universe.

\subsection{Homotopy groups and the Pontryagin--Thom construction}
Here is one way to define the homotopy groups of a spectrum $E$. Let $q \in \bZ$, and write $q=m-n$ for $m, n \geq 0$. Then we can define
\[
 \pi_q(E) = [S^m, S^n \sma E]
\]
where $[-,-]$ denotes maps in $\hoSpec$.

Similarly, if $E$ is a $G$-spectrum and $(V,W)$ is a pair of $G$-representations we can define
\[
 \pi_{V-W}^G(E) = [S^V, S^W \sma E]_G
\]
where $[-,-]_G$ denotes maps in $\hoGSpec$.

We now briefly explain how to use the Pontryagin--Thom construction to define addition in the homotopy groups of $E$. Suppose $\dim(V^G) > 0$. Then for any $n \geq 0$ we can find a $G$-equivariant embedding of $\{1,2,\ldots,n\}$ in $V$. Here $G$ acts trivially on $\{1,2,\ldots,n\}$. It follows that for $\epsilon$ sufficiently small the disjoint union of $n$ balls of radius $\epsilon$ injects $G$-equivariantly in $V$.

Given these embedded $\epsilon$-balls, we get a Pontryagin--Thom collapse map $S^V \to \bigvee_n S^V$, and precomposing with this gives a map
\[
 \big( \pi_{V-W}^G(E) \big)^n \to \pi_{V-W}^G(E).
\]
This is the addition map, which adds up $n$ elements in $\pi_{V-W}^G(E)$ using the abelian group structure.

As we will explain in Section \ref{s:Mackey} below the transfer map is a generalization of this idea. This is of course well known, but precision here helps clarify the overall structure.

\section{The categorified representation ring} \label{s:repring}
\subsection{The Grayson-Quillen construction}
Let $G$ be a compact Lie group, and let $RO(G)$ denote the real representation ring of $G$. If $\alpha \in RO(G)$ we can choose $G$-represen\-ta\-tions $V$ and $W$ with $\alpha = [V]-[W]$, and if $E$ is a $G$-spectrum we can attempt to define $\pi_\alpha^G(E) = [S^V, S^W \sma E]_G$
to be the set of homotopy classes of $G$-equivariant maps as above. The problem is that this is only well defined up to non-canonical isomorphism, and we would like to avoid having to make arbitrary choices. See \cite{Du14} for a related discussion and a proof that it is possible to make all choices necessary to define $\pi_\alpha^G(E)$ for each $\alpha \in RO(G)$ coherently.

Instead we define a category whose objects are actual pairs $(V,W)$ of $G$-re\-pre\-sen\-ta\-tions. In fact, we will start with a non-equivariant category of pairs and introduce equivariance by considering functors from the category $\cB_G *$ with one object and morphism set $G$.

As in the definition of orthogonal spectra, let $\cI$ be the category of finite dimensional real inner product spaces, with morphisms given by linear isometric embeddings.

In analogy with the construction in \cite[\S 4]{SaSc12} we define the Grayson-Quillen category $\fancyI$ as follows. This category also appears in \cite[Section 3]{Kr07} and \cite[Definition 2.6]{SaSc14}.

\begin{defn}
The category $\fancyI$ has objects pairs $(V,W)$ of finite dimensional inner product spaces. A morphism $(V_1,W_1) \to (V_2,W_2)$ is a triple $(f,g,\phi)$, where $f\colon V_1 \to V_2$ and $g\colon W_1 \to W_2$ are maps in $\cI$ and $\phi\colon \big( V_2 - f(V_1) \big) \to \big( W_2 - g(W_1) \big)$ is an isomorphism in $\cI$.
\end{defn}

To obtain a $G$-action we consider the category $\cB_G *$ with one object $*$ and morphism set $G$. If $\cC$ is any category, giving a functor from $\cB_G *$ to $\cC$ is precisely the same data as an object in $\cC$ together with a $G$-action on that object. A natural transformation of functors is precisely the same data as a $G$-equivariant morphism between $G$-objects in $\cC$. 

We claim that the category $Fun(\cB_G *, \fancyI^\op)$ of $G$-objects in $\fancyI^\op$ is a categorification of the representation ring $RO(G)$. We use the opposite category of $\fancyI$ because we can then use Lemma \ref{l:ItoSpec} below to get a functor from $\cB_G *$ to spectra.

There is a ``connected components'' map $Fun(\cB_G *, \fancyI^\op) \to RO(G)$ which takes a pair $(V,W) \in Fun(\cB_G *, \fancyI^\op)$ to the equivalence class $[V]-[W]\in RO(G)$.  The existence of a map $(V_1,W_1)\to (V_2,W_2)$ in $\fancyI^\op$ implies that the equivalence classes $[V_1]-[W_1]$ and $[V_2]-[W_2]$ are equal in $RO(G)$, and thus we regard this map as taking $\pi_0$ of $Fun(\cB_G *, \fancyI^\op)$.

The category $\fancyI^\op$ fits well with the category of orthogonal spectra, as the following lemma (which also appears as \cite[Lemma 4.3]{SaSc14}) shows.

\begin{lemma} \label{l:ItoSpec}
There is a functor $\cF\colon \fancyI^{op} \to Sp$ given on objects by $(V,W) \mapsto \cF_W(S^V)$. This functor takes any map in $\fancyI^{op}$ to a stable equivalence in $Sp$.
\end{lemma}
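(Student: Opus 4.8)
The plan is to define the functor $\cF$ on morphisms and then verify that every morphism in $\fancyI^\op$ lands in a stable equivalence.

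First I would recall the relevant adjunction. The functor $\cF_W\colon \Top \to \Spec$ is left adjoint to $ev_W$, and on the sphere spectrum side we have the canonical map $\cF_W(S^V)(U) = \cJ(W,U)\sma S^V$. A morphism $(V_1,W_1)\to(V_2,W_2)$ in $\fancyI$ is a triple $(f,g,\phi)$ with $f\colon V_1\to V_2$, $g\colon W_1\to W_2$ in $\cI$ and $\phi\colon V_2 - f(V_1) \xrightarrow{\cong} W_2 - g(W_1)$; in $\fancyI^\op$ this is a morphism $(V_2,W_2)\to(V_1,W_1)$, and I want to produce a map $\cF_{W_2}(S^{V_2})\to \cF_{W_1}(S^{V_1})$. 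I would build this as a composite of two basic types of maps. The first type comes from the embedding $g\colon W_1\to W_2$, which induces a natural map $\cF_{W_2}(S^{V_2}) \to \cF_{W_1}(S^{V_2})$ (this is dual to the $\cJ$-morphism associated to $g$ together with the complementary coordinates, and is always a stable equivalence since it is built from the maps $\cF_W(S^W)\to S$ that we formally inverted). The second type is the suspension map: given the isometric embedding $f\colon V_1\to V_2$ with complement $C = V_2 - f(V_1)$, we have $S^{V_2} \cong S^C \sma S^{V_1}$, and since $\phi$ identifies $C$ with $W_2 - g(W_1)$, the extra $S^C$ coordinate gets absorbed into the difference between $W_2$ and $W_1$. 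Assembling these gives a well-defined map $\cF(V_2,W_2)\to \cF(V_1,W_1)$ in $\Spec$, and checking functoriality is a matter of tracking how the isometries and their complements compose — routine but requiring care about the choices of orthogonal complements.

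Once the functor is constructed, the claim that each map goes to a stable equivalence follows by factoring an arbitrary morphism into a composite of (a) maps where $f$ and $g$ are identities and only $\phi$ varies (these are isomorphisms in $\Spec$, since they just reshuffle by an isometry), (b) maps of the form coming from an embedding $W_1\hookrightarrow W_1\oplus C$ paired with $V_1 \hookrightarrow V_1 \oplus C$ for the same $C$ — these are precisely (up to the identifications above) the maps $\cF_{W\oplus C}(S^{V\oplus C}) = \cF_{W\oplus C}(S^C \sma S^V) \to \cF_W(S^V)$ adjoint to the composite $\cF_C(S^C)\sma \cF_W(S^V) \to S \sma \cF_W(S^V)$, which are stable equivalences by the very definition of the stable model structure in terms of inverting $\cF_C(S^C)\to S$. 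Since stable equivalences are closed under composition, the general morphism does too.

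\textbf{The main obstacle} will be the bookkeeping of orthogonal complements and the verification of functoriality: a morphism in $\fancyI$ records an isomorphism $\phi$ between the \emph{complements} of two embeddings, and when composing $(f_2,g_2,\phi_2)\circ(f_1,g_1,\phi_1)$ one must identify $V_3 - f_2f_1(V_1)$ with $(V_3 - f_2(V_2)) \oplus f_2(V_2 - f_1(V_1))$ and check the induced isomorphism on complements agrees with $\phi_2 \circ (\text{stuff}) \circ \phi_1$ after the analogous decomposition on the $W$ side. Getting the functor strictly functorial (as opposed to functorial up to coherent isomorphism) may require being slightly careful about how $\cF_W(S^V)$ depends on $W$ — but since the paper only needs the conclusion up to stable equivalence and the target $\Spec$ here can be taken to be the homotopy category (or one can appeal to the cited \cite[Lemma 4.3]{SaSc14}), this coherence issue is not fatal.
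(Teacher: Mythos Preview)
Your overall strategy matches the paper's, and your argument for the stable-equivalence claim (factoring into isomorphisms and maps of the form $\cF_{W\oplus C}(S^{V\oplus C})\to \cF_W(S^V)$, which are inverted by construction of the stable model structure) is exactly what the paper invokes in one sentence.

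However, your construction of the functor on morphisms has a genuine error. You claim that the embedding $g\colon W_1\to W_2$ alone induces a map $\cF_{W_2}(S^{V_2})\to \cF_{W_1}(S^{V_2})$ which is a stable equivalence. This cannot be right: the source represents a sphere of stable dimension $\dim V_2-\dim W_2$ while the target has dimension $\dim V_2-\dim W_1$, and these differ whenever $g$ is a proper embedding. The two-step decomposition you propose (first change $W$, then change $V$) does not work because the $V$-side and $W$-side cannot be decoupled; the isomorphism $\phi$ of complements is precisely what ties them together.

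The paper avoids this by using the adjunction $\cF_{W_2}\dashv ev_{W_2}$ directly rather than factoring: the desired map $\cF_{W_2}(S^{V_2})\to \cF_{W_1}(S^{V_1})$ is adjoint to a map of based spaces
\[
S^{V_2}\longrightarrow \cF_{W_1}(S^{V_1})(W_2)=\cJ(W_1,W_2)\sma S^{V_1},
\]
and this adjoint is the composite of the homeomorphism $S^{V_2}\cong S^{W_2-g(W_1)}\sma S^{V_1}$ (coming from $f$ and $\phi$ together) with the inclusion $S^{W_2-g(W_1)}\hookrightarrow \cJ(W_1,W_2)$, $w\mapsto (g,w)$. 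Your correct factorization in the second paragraph is essentially an after-the-fact analysis of this same map; you should use it (or the adjunction) as the \emph{definition} rather than the flawed two-step construction.
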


\begin{remark}
We will denote $\cF(V,W)$ by $S^{V-W}$, remembering that $S^{V-W}$ is only weakly equivalent to $S^{(V \oplus U)-(W \oplus U)}$. In \cite{HHR} the spectrum $S^{V-W}$ is denoted $S^{-W} \sma S^V$.
\end{remark}

\begin{proof}
Given a map
\[
 (f,g,\phi)\colon (V_1,W_1) \to (V_2,W_2)
\]
in $\fancyI$ we need to produce a map
\[
 S^{V_2-W_2} \to S^{V_1-W_1}
\]
of spectra. Such a map is adjoint to a map
\[
 S^{V_2} \to \cF_{W_1}(S^{V_1})(W_2) = \cJ(W_1,W_2) \sma S^{V_1}
\]
of spaces. Using $(f,g,\phi)$ we get a homeomorphism
\[
 S^{V_2} \xto{\cong} S^{W_2-g(W_1)} \sma S^{V_1},
\]
and we recall that the Thom space $\cJ(W_1,W_2)$ is given by
\[
 \cJ(W_1,W_2) = \{(h\colon W_1 \to W_2, w \in W_2-h(W_1))\} \cup \{\infty\}.
\]
We have a map
\[
 i_g\colon S^{W_2-g(W_1)} \to \cJ(W_1,W_2)
\]
defined by $w \mapsto (g,w)$ for $w \neq \infty$ and $\infty \mapsto \infty$. Hence we get a composite map
\[
 S^{V_2} \xto{\cong} S^{W_2-g(W_1)} \sma S^{V_1} \xto{i_h \sma id} \cJ(W_1,W_2) \sma S^{V_1}
\]
as required.

We leave it to the reader to check that this is indeed a functor from $\fancyI^{op}$ to $Sp$. It is clear that $S^{V_2-W_2} \to S^{V_1-W_1}$ is a stable equivalence because we localized with respect to those maps when defining the stable model category structure on $Sp$.
\end{proof}

Recall that $\cB_G *$ denotes the category with one object and morphisms set $G$.

\begin{defn}
We denote the category $Fun(\cB_G *, \fancyI^\op)$ of functors from $\cB_G *$ to $\fancyI^\op$ and natural transformations between them by $\RO(G)(\cB_G *)$ or $\RO(G)(*)$.
\end{defn}

Now suppose $G$ is a finite group and $X$ is a finite $G$-set. We let $\cB_G X$ denote the category with object set $X$, and with a morphism $x \to y$ for each $g \in G$ with $gx = y$. This category is sometimes called the ``translation category'' of $G$ acting on $X$. Then we can generalize the above definition as follows.

\begin{defn}
We denote the category $Fun(\cB_G X, \fancyI^\op)$ of functors from $\cB_G X$ to $\fancyI^\op$ and natural transformations between them by $\RO(G)(\cB_G X)$ or $\RO(G)(X)$.
\end{defn}

Composition with $\cF$ yields a map
\[
 \RO(G)(X) = Fun(\cB_G X, \fancyI^\op) \to Fun(\cB_G X, Sp).
\]
\begin{notation}
Given a functor $\chi\colon \cB_G X \to \fancyI^\op$ we will write $S^\chi$ for the composite $\cB_G X \to \fancyI^\op \to Sp$.
\end{notation}

\subsection{Wedge sums and smash products} \label{ss:wedgeandsmash}
For any finite $G$-set $X$, there are two functors
\[
 p_*^\vee, p_*^\sma\colon Fun(\cB_G X, Sp) \to G\text{-}\Spec
\]
which we will now describe. The first takes $F\colon B_G X \to \Spec$ to
\[
 p_*^\vee(F) = \bigvee_{x \in X} F(x)
\]
and the second takes $F$ to
\[
 p_*^\sma(F) = \bigwedge_{x \in X} F(x).
\]
The $G$-action is given by permuting the wedge sums or smash factors using the maps $F(g)\colon F(x) \to F(gx)$ coming from the functoriality of $F$. These constructions are examples of the ``indexed monoidal products'' discussed in \cite{HHR}.

More generally, suppose $f\colon X \to Y$ is a map of $G$-sets and let $f$ also denote the functor $\cB_G X \to \cB_G Y$ of translation categories. Then we have functors
\[
 f_*^\vee, f_*^\sma\colon Fun(\cB_G X, \Spec) \to Fun(\cB_G Y, \Spec)
\]
defined by
\[
 (f_*^\vee F)(y) = \bigvee_{f(x)=y} F(x)
\]
and
\[
 (f_*^\sma F)(y) = \bigwedge_{f(x)=y} F(x).
\]
The functors $p_*^\vee$ and $p_*^\sma$ are induced by the map $p\colon X \to *$.

As one might expect, these functors participate in adjunctions. Denote by $f^*\colon Fun(\cB_G Y, \Spec) \to Fun(\cB_G X, \Spec)$ the functor induced by precomposition.

\begin{prop} \label{p:wedgead}
Let $f\colon X \to Y$ be a map of finite $G$-sets. Then there is an adjunction
\[
 f_*^\vee\colon Fun(\cB_G X, \Spec) \rightleftarrows Fun(\cB_G Y, \Spec)\colon f^*.
\]
\end{prop}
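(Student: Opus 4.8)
The plan is to recognize $f_*^\vee$ as the left Kan extension along the functor $f\colon \cB_G X \to \cB_G Y$ of translation categories; since $f^*$ is precomposition with this functor, the asserted adjunction is then the standard one, $\mathrm{Lan}_f \dashv f^*$, between left Kan extension and restriction. To invoke this I first record that $\Spec$ is cocomplete — colimits of orthogonal spectra are formed levelwise and the coproduct is the wedge sum — so left Kan extensions along functors of small categories exist and are computed pointwise, $(\mathrm{Lan}_f F)(y) = \colim_{f/y} F$, the colimit over the comma category $f/y$ whose objects are pairs $(x,\alpha)$ with $x \in X$ and $\alpha\colon f(x)\to y$ a morphism of $\cB_G Y$.

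The key step is the identification of $f/y$. Writing $\alpha\colon f(x)\to y$ as an element $g\in G$ with $g\cdot f(x) = y$, a morphism $(x,\alpha)\to(x',\alpha')$ is an $h\in G$ with $hx = x'$ subject to the triangle constraint $\alpha'\circ f(h) = \alpha$; a short computation shows that this constraint forces $h=e$ when $(x,\alpha)=(x',\alpha')$, so every object of $f/y$ has only the identity endomorphism, and since $(x,\alpha)$ is isomorphic to $(gx,\id_y)$, the category $f/y$ is equivalent to the discrete category on the fiber $f^{-1}(y)$. Hence $(\mathrm{Lan}_f F)(y) \cong \bigvee_{x\in f^{-1}(y)} F(x) = (f_*^\vee F)(y)$, and I would then check that the structure maps produced by the Kan extension are exactly the ``permute the wedge summands via the maps $F(g)$'' $\cB_G Y$-action described above, so that $f_*^\vee \cong \mathrm{Lan}_f$ and the adjunction follows.

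Alternatively --- and this is really the same argument unwound, so I might present it this way to keep the section self-contained --- I would produce the adjunction isomorphism directly: a natural transformation $f_*^\vee F \To H$ is a family of maps $\bigvee_{f(x)=y}F(x)\to H(y)$ compatible with the $G$-actions, which by the universal property of the wedge sum is the same as a family $\eta_x\colon F(x)\to H(f(x))$ indexed by $x\in X$, and compatibility with the $\cB_G Y$-actions on source and target translates precisely into the assertion that $\eta$ is a morphism $F\To f^* H$ in $Fun(\cB_G X,\Spec)$; naturality in $F$ and $H$ is then immediate. I do not expect any conceptual obstacle here: the only nontrivial input, cocompleteness of $\Spec$, is standard, and the real work is the bookkeeping of keeping straight the direction of morphisms in the translation categories and the left/right conventions for the $G$-actions.
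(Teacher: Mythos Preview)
The paper does not actually prove this proposition: it is stated without proof and treated as standard, with the only further comment being the later citation to \cite[\S B.5.1]{HHR} for the stronger fact that the adjunction is Quillen. Your argument is correct and supplies exactly the details the paper omits. Both presentations you sketch are sound: the identification of $f_*^\vee$ with $\mathrm{Lan}_f$ via the computation that $f/y$ is equivalent to the discrete category $f^{-1}(y)$ is clean and conceptual, while the direct verification of the hom-set bijection using the universal property of the wedge is the same content unwound and is perhaps more in keeping with the elementary, hands-on style of the surrounding section. Either would be an appropriate addition; there is nothing to correct.
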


To get the corresponding adjunction for the smash product we restrict our attention to commutative ring spectra. Let $\cC\Spec$ denote the category of commutative orthogonal ring spectra. Then $f_*^\sma$ and $f^*$ restrict to functors with target $\cC\Spec$.

\begin{prop} \label{p:smashad}
Let $f\colon X \to Y$ be a map of finite $G$-sets. Then there is an adjunction
\[
 f_*^\sma\colon Fun(\cB_G X, \cC\Spec) \rightleftarrows Fun(\cB_G Y, \cC\Spec)\colon f^*.
\]
\end{prop}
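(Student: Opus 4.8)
The plan is to identify $f_*^\sma$ with a left Kan extension along the induced functor $f\colon \cB_G X \to \cB_G Y$ of translation categories, and then invoke the general adjunction between left Kan extension and restriction. With this reading $f^*$ is precomposition along $f$. Since $\cC\Spec$ is cocomplete---it is the category of commutative monoids in the cocomplete closed symmetric monoidal category $\Spec$; see e.g.\ \cite{MM02}---the restriction functor $f^*\colon Fun(\cB_G Y,\cC\Spec) \to Fun(\cB_G X,\cC\Spec)$ admits a left adjoint, namely the left Kan extension $\mathrm{Lan}_f$, computed pointwise by $(\mathrm{Lan}_f F)(y) = \colim\bigl( f/y \to \cB_G X \xto{F} \cC\Spec \bigr)$, where $f/y$ is the comma category of objects $x$ of $\cB_G X$ equipped with a morphism $f(x)\to y$. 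It then suffices to produce a natural isomorphism $\mathrm{Lan}_f F \cong f_*^\sma F$. The conceptual point that makes this work---and the reason the analogous statement is \emph{false} over $\Spec$, where $f_*^\sma$ is not a left adjoint---is that in $\cC\Spec$ the smash product, with unit the sphere spectrum, is the \emph{coproduct}; over $\Spec$ the coproduct is the wedge instead, which is exactly why it is $f_*^\vee$ rather than $f_*^\sma$ that is a left adjoint in Proposition~\ref{p:wedgead}.

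For the identification I would fix an object $y$ of $\cB_G Y$ and inspect $f/y$: its objects are pairs $(x,g)$ with $x$ an object of $\cB_G X$ and $g\in G$ satisfying $g\cdot f(x)=y$, and a morphism $(x,g)\to(x',g')$ is an $h\in G$ with $hx=x'$ and $g'h=g$. Each object $(x,g)$ receives a canonical isomorphism $(x,g)\xto{g}(gx,e)$ with $gx$ in the fiber $f^{-1}(y)$, and the full subcategory on the objects $(x',e)$ with $x'\in f^{-1}(y)$ is discrete, so the inclusion $f^{-1}(y)\hookrightarrow f/y$ is an equivalence of categories. As colimits are invariant under equivalence of indexing categories, $(\mathrm{Lan}_f F)(y)\cong\coprod_{x\in f^{-1}(y)}F(x)=\bigwedge_{f(x)=y}F(x)=(f_*^\sma F)(y)$, the middle equality being the ``coproduct equals smash'' identification in $\cC\Spec$ (with the empty case reading $S$ when $f^{-1}(y)=\emptyset$). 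One then checks that these isomorphisms are natural in $y$---that is, that the functoriality of $\mathrm{Lan}_f F$ over $\cB_G Y$ agrees with the permutation-of-smash-factors structure defining $f_*^\sma F$---and natural in $F$; both fall out by unwinding the colimit presentations together with the universal property of the coproduct.

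The main obstacle is not any single deep fact but the coherent bookkeeping of this last step: one must track how a morphism $y\to y'$ of $\cB_G Y$ acts on the colimit over $f/y$ versus on the indexed smash, and confirm that the coprojections $F(x)\to\bigwedge_{f(x')=y}F(x')$ assemble into the unit and counit of $f_*^\sma\dashv f^*$. It is worth recording explicitly the two ingredients being used: cocompleteness of $\cC\Spec$ with finite coproducts computed as smash products (the standard fact about commutative monoids in a cocomplete symmetric monoidal category in which $\sma$ preserves colimits in each variable), and the elementary equivalence $f/y\simeq f^{-1}(y)$ above. An alternative route avoiding Kan extensions is to construct the natural bijection $Fun(\cB_G Y,\cC\Spec)(f_*^\sma F, H)\cong Fun(\cB_G X,\cC\Spec)(F, f^*H)$ directly and objectwise, using the universal property of the smash product as a coproduct in $\cC\Spec$ and the same analysis of $f/y$; this is more hands-on but requires the identical naturality verification.
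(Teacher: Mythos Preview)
Your argument is correct: the key points are that $\cC\Spec$ is cocomplete with coproduct given by the smash product, and that since $\cB_G X$ is a groupoid the comma category $f/y$ is equivalent to the discrete category $f^{-1}(y)$, so the pointwise left Kan extension formula reduces to the indexed smash. The naturality bookkeeping you flag is routine.

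There is nothing to compare against here: the paper states Proposition~\ref{p:smashad} without proof, merely remarking that $f_*^\sma$ is essentially the Hill--Hopkins--Ravenel norm and later citing \cite[Proposition~B.104]{HHR} for the stronger statement that it is a Quillen adjunction. Your write-up supplies exactly the standard argument that the paper (and, in essence, \cite[\S A.3.2]{HHR}) leaves implicit.
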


The functor $f_*^\sma$ is essentially the Hill--Hopkins--Ravenel norm; see Definition \ref{d:norm} below.

\subsection{An equivalence of categories}\label{subsect:equivalenceofcat}
We pause to remark on an equivalence of categories that allows us to relate our constructions to more familiar definitions. If $X=G/H$, we have equivalences
\[
\xymatrix{
\cB_G G/H  \ar@<.5ex>[r]^-{\kappa}& \ar@<.5ex>[l]^-{\iota} \cB_H * 
}
\]
Here $\iota$ is given by the inclusion of the identity coset, while $\kappa$ depends on a choice of coset representatives $g_1 H,\ldots,g_n H$ for $G/H$ with $g_1 H = eH$. Given such a choice, $\kappa$ is defined as follows. For each $g\colon g_i H \to g_j H$ there is a unique way to write $gg_i = g_j h$ with $h \in H$, and we send $g\colon g_i H \to g_j H$ to $h\colon * \to *$. Then $\kappa \circ \iota = id_{\cB_H *}$, while there is a natural transformation $id \Rightarrow \iota \circ \kappa$ given by $g_i^{-1}\colon g_i H \to eH$.

A functor $F\colon \cB_G G/H \to \Spec$ consists of a spectrum $F(g_i H) = E_{g_i H}$ for each coset, together with a map $g\colon E_{g_i H} \to E_{g_j H}$ for each $g$ with $gg_i = g_j h$. Precomposing with $\iota$ has the effect of only remembering $E_{eH}$ and forgetting the other $E_{g_i H}$.
Conversely, a functor $F'\colon \cB_H * \to Sp$ consists of an $H$-spectrum $E'$. Precomposing with $\kappa$ has the effect of producing a functor $F\colon \cB_G G/H \to Sp$ with $F(g_i H) = E'$ for each $i$ and with $G$-action as defined as above.

If $X$ is isomorphic to $G/H$, where $G/H$ has cosets $g_i H$, then $X$ is also isomorphic to $G/H_i$ with $H_i=g_i H g_i^{-1}$. Using the inclusion $\cB_{H_i} * \to B_G X$ we get a different equivalence of categories where we remember $E_{g_i H}$ as an $H_i$-spectrum rather than $E_{eH}$ as an $H$-spectrum.
We can interpret this as saying that given a $G$-set $X$ and a functor $F\colon \cB_G X \to \Spec$, we have to choose a ``basepoint'' of each orbit in order to identify $F$ with a collection of $H_i$-equivariant spectra.

This equivalence of categories allows us to understand homotopy classes of maps in the functor categories $Fun(\cB_GX,\Spec)$.  When $X$ is a point, $Fun(\cB_GX,\Spec)$ is simply the category of $G$-spectra, and we endow it with the genuine model structure.  For an orbit $G/H$, the category $Fun(\cB_GG/H, \Spec)$ is equivalent to $Fun(\cB_H*,\Spec)$, the category of $H$-spectra.  Hence $Fun(\cB_GG/H,\Spec)$ inherits the genuine $H$-equivariant model structure from the category of $H$-spectra.  For a more general $G$-set $X$, we can view the category$Fun(\cB_GX,\Spec)$ as the product of categories of the form $Fun(\cB_G G/H_i,\Spec)$ via a decomposition of $X$ into orbits.  This endows $Fun(\cB_GX,\Spec)$ with the model structure given by taking the product of the genuine $H_i$ model structures.  This process is independent of the chosen decomposition of $X$ into orbits; see \cite[\S B.5]{HHR} where they also provide the following lemmas.
\begin{lemma}[{\cite[\S B.5.1]{HHR}}] The adjunction of Proposition \ref{p:wedgead} is a Quillen adjunction. 
\end{lemma}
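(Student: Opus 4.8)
The plan is to reduce to the standard fact that, for an inclusion $H \leq K$ of finite groups, the induction--restriction adjunction $K_+\sma_H(-)\colon H\text{-}\Spec \rightleftarrows K\text{-}\Spec\colon \mathrm{res}^K_H$ is a Quillen adjunction for the genuine stable model structures, together with the observations that a product of Quillen adjunctions is again a Quillen adjunction and that being a Quillen adjunction is detected after passing to the factors of a product model structure.

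First I would decompose everything into orbits. Choose a decomposition $Y \cong \coprod_j G/K_j$; then $Fun(\cB_G Y,\Spec) \cong \prod_j Fun(\cB_G G/K_j, \Spec)$ with the product model structure, and setting $X_j = f^{-1}(G/K_j)$ the adjunction of Proposition \ref{p:wedgead} splits as the product over $j$ of the adjunctions associated to the $G$-maps $f_j\colon X_j \to G/K_j$. Decomposing each $X_j$ further into orbits $\coprod_i G/H_{ij}$, with $H_{ij}$ chosen inside $K_j$ (possible since a $G$-map $G/H\to G/K$ forces $H$ to be subconjugate to $K$), the functor $f_j^*$ is the ``diagonal'' $F \mapsto (f_{ij}^* F)_i$ into $\prod_i Fun(\cB_G G/H_{ij},\Spec)$ and its left adjoint $(f_j)_*^\vee$ is the wedge $\bigvee_i (f_{ij})_*^\vee$ over the one-orbit maps $f_{ij}\colon G/H_{ij}\to G/K_j$. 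Since fibrations, cofibrations and weak equivalences in a product model structure are detected factorwise, and since the right adjoint of $\bigvee_i (f_{ij})_*^\vee$ is the product of the $f_{ij}^*$, the whole adjunction is Quillen as soon as each single-orbit adjunction $\big((f_{ij})_*^\vee, f_{ij}^*\big)$ is. One should also recall from \cite[\S B.5]{HHR} that the product model structure does not depend on the chosen orbit decomposition, so these reductions are harmless.

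It then remains to treat a single map $f\colon G/H \to G/K$ with $H \leq K$. Here I would invoke the equivalence of categories of Section \ref{subsect:equivalenceofcat}, which identifies $Fun(\cB_G G/H,\Spec)$ with $H\text{-}\Spec$ and $Fun(\cB_G G/K,\Spec)$ with $K\text{-}\Spec$ compatibly with their model structures. Under these equivalences $f^*$ becomes $\mathrm{res}^K_H$ --- precomposing with $\cB_G G/H \to \cB_G G/K$ and then restricting to the identity coset simply remembers the underlying spectrum of $F(eK)$ with its residual $H$-action --- and, using the formula $(f_*^\vee F)(eK) = \bigvee_{kH\in K/H} F(kH)$, the functor $f_*^\vee$ becomes the induction $K_+\sma_H(-)$. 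The induction--restriction adjunction on genuine equivariant orthogonal spectra is a Quillen adjunction by \cite{MM02} (see also \cite[\S B]{HHR}); equivalently, $\mathrm{res}^K_H$ preserves fibrations and acyclic fibrations because these, like the weak equivalences, are defined through fixed-point homotopy groups for subgroups and restriction merely reindexes them. I expect the only genuine obstacle to be in this last step: one must check that the basepoint (coset-representative) choices implicit in the equivalence $Fun(\cB_G G/H,\Spec)\simeq H\text{-}\Spec$ can be made to agree with those used in the wedge decomposition defining $f_*^\vee$, so that the abstract adjunction of Proposition \ref{p:wedgead} is carried to induction--restriction on the nose; given the explicit description of $\kappa$ and $\iota$ in Section \ref{subsect:equivalenceofcat} this is routine but should be spelled out.
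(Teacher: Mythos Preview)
The paper does not supply its own proof of this lemma; it simply cites \cite[\S B.5.1]{HHR}. Your argument is correct and is precisely the standard reduction one finds there: decompose source and target into orbits so that the adjunction becomes a product of adjunctions, then use the equivalence $Fun(\cB_G G/H,\Spec)\simeq H\text{-}\Spec$ to identify each factor with the classical induction--restriction Quillen adjunction from \cite{MM02}. The only point worth flagging is cosmetic: your phrase ``being a Quillen adjunction is detected after passing to the factors of a product model structure'' is really the statement that a \emph{product} of Quillen adjunctions is Quillen (since fibrations and acyclic fibrations in a product model structure are levelwise), which you then use correctly; and your remark about compatible basepoint choices is exactly right and is handled by the explicit $\kappa,\iota$ of Section~\ref{subsect:equivalenceofcat}.
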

\begin{lemma}[{\cite[Proposition B.104]{HHR}}]
The adjunction in  Proposition \ref{p:smashad} is also a Quillen adjunction.  In fact, the functor $f^\sma_*$ further descends to a left derived functor between the homotopy categories $Fun(\cB_GX,\Spec)$ and $Fun(\cB_GY,\Spec)$.  
\end{lemma}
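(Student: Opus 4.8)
The plan is to show that the adjunction from Proposition \ref{p:smashad} is Quillen by verifying that $f^*$ preserves fibrations and trivial fibrations, and then to upgrade this to a statement about the homotopy categories of the \emph{non-multiplicative} functor categories. The first part is essentially formal once we recall how the model structures are set up. Using the orbit decomposition $X \cong \coprod_i G/H_i$ of Section \ref{subsect:equivalenceofcat}, the model structure on $Fun(\cB_G X, \Spec)$ is the product of the genuine $H_i$-equivariant model structures, and similarly for $Y$. The right adjoint $f^*$ is just precomposition, hence it is computed objectwise and commutes with the forgetful functors to underlying orthogonal spectra. Since fibrations and trivial fibrations in the genuine model structure on $H$-spectra are detected by families of fixed-point functors — equivalently, by mapping out of the generating cofibrations $\cF_V(S^V \sma (H/K)_+) \to \cF_V(D^n_+ \sma (H/K)_+)$ — and $f^*$ merely relabels and repeats the spectra involved without changing the underlying spaces, it is immediate that $f^*$ carries (trivial) fibrations to (trivial) fibrations. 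This gives the Quillen adjunction.

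Next I would produce the left derived functor $Lf^\sma_*$ on the homotopy categories of $Fun(\cB_G X, \Spec)$ and $Fun(\cB_G Y, \Spec)$, rather than just on the commutative-ring-spectra versions. The key point, which is exactly the content of \cite[Proposition B.104]{HHR}, is that although $f^\sma_*$ is only defined as a left adjoint on commutative ring spectra, its left derived functor extends to all of $Fun(\cB_G X, \Spec)$. Concretely, one takes a cofibrant replacement of an arbitrary $F \in Fun(\cB_G X, \Spec)$ in a suitable model structure where cofibrant objects are ``flat enough'' for the indexed smash product to be homotopically well behaved — for instance objects that are levelwise built from the generating cofibrations $\cF_V(S^V \sma T)$ — and then applies $f^\sma_*$ directly. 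The essential input is that the indexed smash product $f^\sma_*$ takes such cofibrant objects to objects on which it already has the correct homotopy type, because the indexed smash product of cofibrant orthogonal spectra is again homotopically meaningful; this is the flatness/cofibrancy analysis carried out in \cite[Appendix B]{HHR}, and in particular the fact that the norm of a cofibrant spectrum is cofibrant.

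To organize the verification I would proceed in three steps. First, reduce to orbits: since both $f^\sma_*$ and the model structures are compatible with decomposing $X$ and $Y$ into orbits, it suffices to treat the case $X = G/H_1 \sqcup \cdots \sqcup G/H_k \to G/K = Y$ of a map onto a single orbit, and then further the case $k=1$, where $f^\sma_*$ becomes (up to the equivalences of Section \ref{subsect:equivalenceofcat}) the Hill--Hopkins--Ravenel norm $N_H^K$. Second, invoke the homotopical analysis of the norm: $N_H^K$ preserves cofibrations and weak equivalences between cofibrant objects, hence admits a left derived functor on $ho\,K\text{-}\Spec$. Third, assemble these derived functors over the orbit decomposition to obtain $Lf^\sma_*$ on the full homotopy category, checking that the result is independent of the chosen decomposition, again using \cite[\S B.5]{HHR}.

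The main obstacle is the second step: controlling the homotopical behavior of the indexed smash product, i.e.\ the norm, on general (not-necessarily-ring) spectra. The norm is a highly non-additive construction, so a weak equivalence $F \to F'$ need not induce a weak equivalence $N_H^K F \to N_H^K F'$ unless $F, F'$ are suitably cofibrant; and even verifying that the norm of a cofibrant spectrum is cofibrant requires the careful point-set analysis of the interaction between the indexed smash product and the generating cofibrations. Fortunately this is precisely what is established in \cite[Appendix B]{HHR}, so rather than reproving it I would cite it and spend the effort making sure our functor-category formulation of $f^\sma_*$ matches theirs under the equivalences of Section \ref{subsect:equivalenceofcat}.
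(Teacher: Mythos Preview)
The paper does not actually prove this lemma: it is stated with the attribution \cite[Proposition B.104]{HHR} in the header and no proof text follows. Your proposal is therefore not being compared against an argument in the paper but against a bare citation. Your sketch is reasonable and, appropriately, its substantive content (that $f^*$ preserves fibrations and trivial fibrations because it is computed objectwise, and that the hard part---homotopical control of the indexed smash product on non-ring spectra---is the cofibrancy analysis in \cite[Appendix B]{HHR}) ultimately defers to the same source the paper cites. So you are doing strictly more than the paper does, but in the same direction: reducing to \cite{HHR} rather than supplying an independent argument.

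One small quibble: in your first paragraph you describe the generating cofibrations as $\cF_V(S^V \sma (H/K)_+) \to \cF_V(D^n_+ \sma (H/K)_+)$, which is not quite right (the domain should involve a sphere $S^{n-1}_+$, not $S^V$, and the $\cF_V$ already encodes the representation). This is cosmetic and does not affect the logic, since the only property you use is that (trivial) fibrations are detected levelwise and $f^*$ is precomposition.
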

This result is an important technical underpinning to the Hill--Hopkins--Ravenel norm.   For us, this fact will permit us to use the functor smash-induction functor $f^\sma_*$ even when dealing with spectra that do not enjoy a commutative ring structure, which we will do throughout the remainder of this paper.

To illustrate how these definitions fit into the existing literature, we next define the Hill--Hopkins--Ravenel norm.

\begin{defn}[{\cite{HHR}}] \label{d:norm}
Given an $H$-spectrum $E$ and a choice of $\kappa\colon B_G G/H \to B_H *$ as above, define the transfer $T_H^G(E)$ to be
\[
 T_H^G(E) = p_*^\vee(\kappa^*(E))
\]
and define the norm $N_H^G(E)$ to be
\[
 N_H^G(E) = p_*^\sma(\kappa^*(E)).
\]
\end{defn}

Here $T_H^G(E) = \bigvee_{g_i H} E$, and we have an equivalence
\[
 T_H^G(E) \to G_+ \sma_H E
\]
given by sending the wedge summand of $E$ indexed by $g_i H$ to $[g_i, E]$. Hence $T_H^G$ is equivalent to the usual induction functor. 

\subsection{Equivariant homotopy groups}
Above we defined a category $\RO(G)(X)$ for each finite $G$-set $X$, and in the next two sections we will make $\RO(G)$ into a category in two interesting ways. For now we make $\RO(G)$ into a category by defining $Hom_{\RO(G)}((X,\chi), (Y,\gamma))$ to be the set of pairs $(f,\tilde{f})$ where $f\colon X \to Y$ is an isomorphism and $\tilde{f}\colon \chi \Rightarrow f^* \gamma$ is a natural transformation of functors.

Composition is defined in the obvious way: Given $(f,\tilde{f})\colon (X,\chi) \to (Y,\gamma)$ and $(g,\tilde{g})\colon (Y,\gamma) \to (Z, \zeta)$, the composite $(g, \tilde{g}) \circ (f,\tilde{f})$ is defined as $(g \circ f, f^*(\tilde{g}) \circ \tilde{f})$.

For each $(X,\chi) \in \RO(G)$ and orthogonal $G$-spectrum $E$ we can define the $\RO(G)$-graded homotopy group
\[
 \upis(E)(X,\chi) = [\bigvee_{x \in X} S^{\chi(x)}, E]_G = [p_*^\vee(S^\chi), E]_G
\]
to be the set of homotopy classes of $G$-equivariant maps from the wedge of spheres determined by $(X,\chi)$.

Given a map $(f, \tilde{f})\colon (X, \chi) \to (Y,\gamma)$ in $\RO(G)$, we can apply $\cF(-)$ to $\tilde{f}$ to get a natural transformation $\cF(\tilde{f})\colon S^\chi \to S^{f^* \gamma}$. Moreover, because $f$ is an isomorphism the $G$-spectra $p_*^\vee(S^{f^* \gamma})$ and $p_*^\vee(S^\gamma)$ are canonically isomorphic. Hence we have a map
\[
 p_*^\vee(S^\chi) \xto{\tilde{f}} p_*^\vee(S^{f^* \gamma}) \xto{\cong} p_*^\vee(S^\gamma).
\]
By precomposing with this we get a map
\[
 (f,\tilde{f})^*\colon \upis(E)(Y,\gamma) \to \upis(E)(X,\chi).
\]
This is clearly compatible with composition in $\RO(G)$, so we get a functor
\[
 \upis(E)\colon \RO(G)^\op \to Ab.
\]

In Section \ref{s:Mackey} we will add maps to the category $\RO(G)^\op$ to define a category $\RO(G)^\Mackey$, so that $\upis(E)$ defines a functor from $\RO(G)^\Mackey$ to $\Ab$. This will be our notion of a \emph{graded Mackey functor}.

In Section \ref{s:Tambara} we will add additional maps to $\RO(G)^\Mackey$ to define a category $\RO(G)^{\Tambara}$, and show that if $E$ has the appropriate multiplicative structure then $\upis(E)$ defines a functor from $\RO(G)^{\Tambara}$ to $\Ab$. This will be our notion of a \emph{graded Tambara functor}.

\subsection{The constant functor at $E$}
If $E$ is a $G$-spectrum and $X$ is a finite $G$-set we can define a functor
\[
 \const_X(E)\colon \cB_G X \to \Spec
\]
by sending any $x$ to $E$.  A map $g\colon x\to y$ is sent to the map $E\to E$ given by the action of $g\in G$.

Above we defined $\upis(E)(X,\chi) = [p_*^\vee(S^\chi), E]_G$, but by the adjunction in Proposition \ref{p:wedgead} this is naturally isomorphic to $[S^\chi, \constant_X(E)]_{\cB_G X}$. Here $[-,-]_{\cB_G X}$ denotes the set of homotopy classes of natural transformations of functors from $\cB_G X$, defined using the model structure of \S\ref{subsect:equivalenceofcat}. This simply says that giving a $G$-equivariant map from $\bigvee_{x \in X} S^{\chi(x)}$ to $E$ is the same as giving a $G$-equivariant map from $\bigvee_{x \in X} S^{\chi(x)}$ to $\bigvee_{x \in X} E$ sending the sphere indexed by $x$ to the copy of $E$ indexed by $x$.

Given $(f, \tilde{f})\colon (X, \chi) \to (Y,\gamma)$, note that $f^*$ gives a map from the set of (homotopy classes of) natural transformations of functors from $Y$ to the set of (homotopy classes of) natural transformations from $X$ and that $f^* \const_Y(E) = \const_X(E)$. So from this point of view the induced map $(f,\tilde{f})^*\colon \upis(E)(Y,\gamma) \to \upis(E)(X,\chi)$ is given by the composite
\[
 [S^\gamma, \const_Y(E)]_{\cB_G Y} \xto{f^*} [S^{f^* \gamma}, \const_X(E)]_{\cB_G X} \xto{\tilde{f}^*} [S^\chi, \const_X(E)]_{\cB_G X}.
\]
We will use this repeatedly in the rest of the paper.

\subsection{Relation to usual homotopy groups}\label{subsect:relatingtoHhtpygrps}
If $X$ is an orbit $G/H$, the $\RO(G)$-graded homotopy group $\upis(E)(X,\chi)$ can be identified with an $H$-equivariant homotopy group. As above we have a weak equivalence
\[
 p_*^\vee(S^\chi) = \bigvee_{gH \in G/H} S^{\chi(gH)} \simeq G_+\sma_H S^{\chi(eH)}
\]
given on the wedge summand indexed by $gH$ by mapping to $g$ in the first smash factor and by using the map $g^{-1}\colon S^{\chi(gH)} \to S^{\chi(eH)}$ on the second smash factor. (By abuse of notation one might want to write $v \mapsto [g, g^{-1} v]$.)

The standard change of groups adjunctions then apply to show that
\[
 [\bigvee_{gH \in G/H} S^{\chi(gH)}, E]_G \cong [S^{\chi(eH)}, E]_H
\]
where on the right hand side we have regarded the $G$-spectrum $E$ as an $H$-spectrum by restricting the action.  Notice that $S^{\chi(eH)}$ is also an $H$-spectrum because the coset $eH$ is stabilized by $H$. The group on the right hand side is the usual $\RO(H)$-graded homotopy group
\[
 \pi_{\chi([eH])}^H(E).
\]

\section{Graded spans and graded Mackey functors} \label{s:Mackey}
Recall that an ordinary Mackey functor consists of an abelian group $M(X)$ for each finite $G$-set $X$, and a morphism $M(X) \to M(Y)$ for each diagram
\[
 X \xfrom{r} A \xto{t} Y.
\]
The map $r\colon A \to X$ determines a \emph{restriction map} $r^*\colon M(X) \to M(A)$ and the map $t\colon A \to Y$ determines a \emph{transfer map} $t_*^\vee\colon M(A) \to M(Y)$. (We add the superscript $\vee$ to distinguish this from the map $n_*^\sma$ we will define later.) Diagrams of this form yield a category in which composition is given by pullback. If
\[
 Y \xfrom{r'} B \xto{t'} Z
\]
is another span, the composite span is given by the pullback $D = A \times_Y B$ together with the obvious maps to $X$ and $Z$.   In this section, we define an $\RO(G)$-version of this construction.

\subsection{Restriction maps}
We define a category $\RO(G)^R$ with the same objects as $\RO(G)$ as follows.

\begin{defn}
Let $(X,\chi)$ and $(A,\alpha)$ be in $\RO(G)$. A map in $\RO(G)^R$ from  $(A,\alpha)$ to $(X,\chi)$ is a pair $(r, \tilde{r})$ where $r\colon A \to X$ is a $G$-map and $\tilde{r}\colon \alpha \Rightarrow r^* \chi$ is a natural transformation of functors.

Composition in $\RO(G)^R$ is defined in the obvious way, by composing maps of $G$-sets and natural transformations.
\end{defn}

\begin{remark}
The existence of the natural transformation $\tilde{r}$ implies that for each $a \in A$ there is a map $\tilde{r}_a\colon \alpha(a) \to \chi(r(a))$ in $\fancyI^\op$. The map $\tilde{r}_a$ gives an isomorphism of virtual vector spaces between $(V_{\chi(r(a))}, W_{\chi(r(a))})$ and $(V_{\alpha(a)}, W_{\alpha(a)})$, i.e., an isomorphism $V_{\alpha(a)} \oplus W_{\chi(r(a))} \cong V_{\chi(r(a))}  \oplus W_{\alpha(a)}$.

Taking the $G$-action into account we can say the following. Let $G_a$ denote the  stabilizer of $a$. Then $\alpha(a)$ determines a representative $[\alpha(a)] \in RO(G_a)$ and $\chi(r(a))$ determines a representative $[\chi(r(a))] \in RO(G_{r(a)})$, and it follows that
\[
 [\alpha(a)] = Res^{G_{r(a)}}_{G_a}[\chi(r(a))].
\]
In other words, if the above equation does not hold then a restriction map does not exist. Another point of view is that up to maps in $\fancyI$ the map $r\colon A \to X$ and $\chi \in \RO(G)(X)$ determines $\alpha$.

If we decompose $X$ and $A$ into orbits by specifying isomorphisms $X \cong G/H_i$ and $A \cong G/K_j$ then $\chi$ determines an element $[\chi] \in \bigoplus RO(H_i)$ and $\alpha$ determines an element $[\alpha] \in \bigoplus RO(K_j)$. If we choose the isomorphisms in such a way that $r$ corresponds to maps $eK_j \mapsto eH_i$ then $[\alpha]_j = Res^{H_i}_{K_j}([\chi]_i)$. Any other choice involves additional conjugation.
\end{remark}

By Proposition \ref{p:wedgead}, the pair $(r,\tilde{r})$ gives a $G$-equivariant map
\[
 (r,\tilde{r})_*\colon \bigvee_{a \in A} S^{\alpha(a)} \to \bigvee_{x \in X} S^{\chi(x)}.
\]
In more categorical language, the natural transformation $\cF(\tilde{r}) \in \Fun(\cB_G A,\Spec)$ is adjoint to a natural transformation $r_*^\vee \alpha \To \chi$ in $\Fun(\cB_G X,\Spec)$ which induces the displayed map on equivariant bouquets of spheres.

Given a map $(r,\tilde{r})\colon (A,\alpha) \to (X,\chi)$ in $\RO(G)^R$ and a $G$-spectrum $E$, we then get a map
\[
 (r, \tilde{r})^*\colon \upis(E)(X,\chi) \to \upis(E)(A,\alpha)
\]
by precomposing with this map of bouquets of spheres. This is clearly compatible with composition, so we have a functor
\[
 \upis(E)\colon (\RO(G)^R)^\op \to \Ab.
\]
We can also phrase this in terms of the constant spectrum $\const_A(E)$. Given an element $\phi \in [S^\chi, \const_X(E)]_{\cB_G X}$ we map it to the composite
\[
 S^\alpha \xToo{\tilde{r}} r^* S^\chi \xToo{r^* \phi} r^* \const_X(E) = \const_A(E).
\]

\begin{remark}
A map $(f, \tilde{f})\colon (X,\chi) \to (Y,\gamma)$ in $\RO(G)^R$ for which $f$ is an isomorphism is exactly the same as a map in the category $\RO(G)$ defined earlier. Hence we can view $\RO(G)^R$ as a natural generalization of $\RO(G)$.
\end{remark}

\subsection{Transfer maps}
Next we define a category $\RO(G)^T$ with the same objects as $\RO(G)$.

\begin{defn}
Let $(A,\alpha)$ and $(Y,\gamma)$ be in $\RO(G)$. A map in $\RO(G)^T$ from $(A,\alpha)$ to $(Y,\gamma)$ is a pair $(t,\tilde{t})$ where $t\colon A \to Y$ is a $G$-map and $\tilde{t}\colon \alpha \To t^* \gamma$ is a natural transformation satisfying the following conditions:
\begin{itemize}
 \item Each component $\tilde{t}_a\colon \alpha(a) \to \gamma(t(a))$ is an isomorphism in $\fancyI^{op}$. In other words, $\tilde{t}_a$ is a pair of isomorphisms $V_{\alpha(a)} \to V_{\gamma(t(a))}$ and $W_{\alpha(a)} \to W_{\gamma(t(a))}$.
 \item There exists a $G$-equivariant embedding of $A$ in $\coprod V_{\gamma(y)}$ with $a \in V_{\gamma(t(a))}$. This implies that for $\epsilon$ sufficiently small the corresponding $\epsilon$-balls are disjoint, and it implies that we can construct an injective ($G$-equivariant and continuous, but not linear) map $\coprod V_{\alpha(a)} \to \coprod V_{\gamma(y)}$ with image the disjoint union of the $\epsilon$-balls.
\end{itemize}

Composition in $\RO(G)^T$ is defined in the obvious way. The condition that each component of $\tilde{t}$ is an isomorphism is obviously stable under composition and the existence of a $G$-equivariant embedding is too.
\end{defn}

Given $(t,\tilde{t})$ in $\RO(G)^T$ the conditions on $\tilde{t}$ give us a Pontryagin--Thom collapse map
\[
 PT(\tilde{t})\colon \bigvee_{y \in Y} S^{\gamma(y)} \to \bigvee_{a \in A} S^{\alpha(a)}.
\]
In more detail, we pick a $G$-equivariant embedding of $A$ in $\coprod V_{\gamma(y)}$ and an $\epsilon$ such that the $\epsilon$-balls are disjoint. The $U$'th space of $\bigvee_{y \in Y} S^{\gamma(y)}$ is by definition given by
\[
 \bigvee_{y \in Y} \cJ(W_{\gamma(y)}, U) \sma S^{V_{\gamma(y)}},
\]
and similarly for $\bigvee_{a \in A} S^{\alpha(a)}$. Collapsing everything outside the $\epsilon$-balls in each $S^{V_y}$ to a point gives a map to $\bigvee_{a \in A} \cJ(W_{\gamma(t(a))}, U) \sma S^{V_{\gamma(t(a))}}$, and we can then use $(\tilde{t}_a)^{-1}$ to identify this with $\bigvee_{a \in A} \cJ(W_{\alpha(a)}, U) \sma S^{V_{\alpha(a)}}$.

\begin{remark}
The definition of the map
\[
 PT(\tilde{t})\colon \bigvee_{y \in Y} S^{\gamma(y)} \to \bigvee_{a \in A} S^{\alpha(a)}
\]
involved a choice of an embedding and a choice of an $\epsilon$, but any two choices give stably equivalent maps between the bouquets of spheres.
\end{remark}

The Pontryagin--Thom construction can be interpreted as a natural transformation
\[
 PT(\tilde{t})\colon S^\gamma \Rightarrow t_*^\vee S^\alpha.
\]

\begin{remark}
The natural transformation $\tilde{t}$ gives isomorphisms $V_{\gamma(t(a))}\to V_{\alpha(a)}$ and $W_{\gamma(t(a))}\to W_{\alpha(a)}$ in $\fancyI$. Taking the $G$-action into account this implies that
\[
 [\alpha(a)] = Res^{G_{t(a)}}_{G_a}[\gamma(t(a))].
\]
Hence $t\colon A \to Y$ and $\gamma \in \RO(G)(Y)$ determine $\alpha$ up to isomorphism. But note that the converse is not true: given $t\colon A \to Y$ and $\alpha \in \RO(G)(A)$ there are potentially many non-isomorphic choices of $\gamma \in \RO(G)(Y)$ that can be the target of the transfer map.
\end{remark}

Given a map $(t,\tilde{t})\colon (A,\alpha) \to (Y,\gamma)$ in $\RO(G)^T$ and a $G$-spectrum $E$, we then get a map
\[
 (t,\tilde{t})_*^\vee\colon \upis(E)(A,\alpha) \to \upis(E)(Y,\gamma)
\]
by sending a natural transformation $\phi \in [S^\alpha, \const_A(E)]_{\cB_G A}$ to the composite
\[
 S^\gamma \xToo{PT(\tilde{t})} t_*^\vee S^\alpha \xToo{t_*^\vee \phi} t_*^\vee \const_A(E) \Too \const_Y(E),
\]
where the last natural transformation is given on $y \in Y$ by the fold map $\bigvee_{t(a)=y} E \to E$. This is compatible with maps in $\RO(G)^T$, so we get a functor
\[
 \upis(E)\colon \RO(G)^T \to \Ab.
\]

If we choose isomorphisms $A \cong \coprod G/K_j$ and $Y \cong\coprod G/L_k$ in such a way that $t$ corresponds to $eK_j \mapsto eL_k$ then $(t,\tilde{t})_*^\vee$ corresponds to a combination of the usual transfer map and addition maps. Specifically, if we identify $\upis(E)(A,\alpha)$ with
\[
 \bigoplus_j [S^{\alpha(eK_j)}, E]_{K_j},
\]
then $(t, \tilde{t})$ corresponds to the composite of the standard transfer maps
\[
 \bigoplus_j[S^{\alpha(eK_j)}, E]_{K_j}\to \bigoplus_j[S^{\gamma(eL_k)}, E]_{L_k},
\]
which the Pontryagin--Thom construction produces from the isomorphism $S^{\alpha(eK_j)}\cong S^{Res^{L_k}_{K_j}\gamma(eL_k)}$, and of addition over the $eK_j$'s in the preimage of $eL_k$.

\begin{remark}
Given a map $(f,\tilde{f})\colon (Y,\gamma) \to (X,\chi)$ in $\RO(G)^T$ where $f$ is an isomorphism, the definition implies that $\tilde{f}$ is also invertible. It follows that the inverse $(f,\tilde{f})^{-1}\colon (X,\chi) \to (Y,\gamma)$ is a map in the category $\RO(G)$, and given a $G$-spectrum $E$ the two maps
\[
 (f,\tilde{f})_*^\vee,\ \big( (f,\tilde{f})^{-1} \big)^*\colon \upis(E)(Y,\gamma) \to \upis(E)(X,\chi)
\]
agree. Hence we can regard $\RO(G)^T$ as a generalization of $\RO(G)^\op$.
\end{remark}

\subsection{$\RO(G)$-graded spans and Mackey functors}\label{ss:ROGMackdefn}
We can now define a category $\RO(G)^\Mackey$ with the same objects as $\RO(G)$, where a morphism $(X, \chi) \to (Y,\gamma)$ is an equivalence class of spans
\[
 (X,\chi) \xfrom{(r,\tilde{r})} (A,\alpha) \xto{(t,\tilde{t})} (Y,\gamma).
\]
Here $(r,\tilde{r})$ is a map in $\RO(G)^R$ and $(t,\tilde{t})$ is a map in $\RO(G)^T$. The equivalence relation is generated by declaring two spans to be equivalent if there is a commutative diagram (on both the level of $G$-sets and the level of natural transformations):
\[ \xymatrix{
 & (A_1, \alpha_1) \ar[ld]_-{(r_1,\tilde{r}_1)} \ar[dd]^\cong \ar[rd]^-{(t_1,\tilde{t}_1)} & \\
(X,\chi) & & (Y,\gamma) \\
& (A_2,\alpha_2) \ar[ul]^-{(r_2,\tilde{r}_2)} \ar[ur]_-{(t_2,\tilde{t}_2)} &
} \]

Composition in $\RO(G)^\Mackey$ is given by pullback, as follows. Consider spans 
\[
 (X,\chi)\xleftarrow{(r,\tilde{r})} (A,\alpha)\xto{(t,\tilde{t})} (Y,\gamma)
\]
and 
\[
 (Y,\gamma)\xleftarrow{(r',\tilde{r}')} (B,\beta) \xto{(t',\tilde{t}')} (Z,\zeta).
\]
Their composite is the span
\[
 (X,\chi) \xleftarrow{(r,\tilde{r})} (A,\alpha) \xleftarrow{(r'',\tilde{r}'')} (D,\delta) \xto{(t'',\tilde{t}'')} (B,\beta) \xto{(t',\tilde{t'})} (Z,\zeta)
\]
where $D= A\times_Y B$ is the pullback of $A$ and $B$ in $G$-sets and $\delta\colon \cB_G D \to \fancyI^\op$ is the functor given by sending $(a,b)$ to $\beta(b)$.

Then there is an obvious map $(t'', \tilde{t}'')\colon (D,\delta) \to (B,\beta)$. To define the other map $(r'', \tilde{r}'')\colon (D,\delta) \to (A,\alpha)$ we need a map $(V_{\alpha(a)},W_{\alpha(a)}) \to (V_{\delta(a,b)}, W_{\delta(a,b)}) = (V_{\beta(b)}, W_{\beta(b)})$ in $\fancyI$. This we simply define to be the composite of the inverse of the isomorphism from $(V_{\gamma(y)}, W_{\gamma(y)})$ to $(V_{\alpha(a)}, W_{\alpha(a)})$ and the map $(V_{\gamma(y)},W_{\gamma(y)}) \to (V_{\beta(b)}, W_{\beta(b)})$.

\begin{remark}
When $(r,\tilde{r})$ and $(t',\tilde{t}')$ are the identity, we think of this composition as ``moving a restriction map past a transfer map.'' Explicitly, the composite of the transfer map $(A,\alpha)\to (Y,\gamma)$ and the restriction map $(Y,\gamma)\leftarrow (B,\beta)$ is the span
\[
 (A,\alpha) \leftarrow (D,\delta)\to (B,\beta)
\]
with $(D,\delta)$ as above.  We will use this terminology in Section \ref{s:Tambara}.
\end{remark}

With these definitions we claim that there are obvious functors from $(\RO(G)^R)^\op$ and $\RO(G)^T$ to $\RO(G)^\Mackey$ that are the identity on objects, given by inserting the identity map as one leg of the span. It suffices to observe that the composition laws are compatible. For $\RO(G)^R$ this is clear: if $(t,\tilde{t})\colon (A,\alpha) \to (Y,\gamma)$ is the identity then $(D,\delta)=(B,\beta)$ and $(r'',\tilde{r}'')=(r',\tilde{r}')$.

For $\RO(G)^T$ this is somewhat less obvious. Using the composition law in $\RO(G)^T$ we get
\[
 (A,\alpha) \xfrom{(=,=)} (A,\alpha) \xto{(t' \circ t, t^* \tilde{t}' \circ \tilde{t})} (Z,\zeta)
\]
while using the composition law in $\RO(G)^\Mackey$ we get
\[
 (A,\alpha) \xfrom{(=,\tilde{t}^{-1})} (A, \gamma \circ t) \xto{(t' \circ t, t^* \tilde{t}')} (Z,\zeta).
\]
These define equivalent spans because the isomorphism $(A,\alpha)\to (A,\gamma\circ t)$ given by $(\id, \tilde{t})$ fits into the following commutative diagram:
\[ \xymatrix{
 & (A, \alpha) \ar[ld]_{(\id,\id)} \ar[dd]_{(\id,\tilde{t})} \ar[rd]^{(t'\circ t, t^*\tilde{t}'\circ \tilde{t})} & \\
(A,\alpha) & & (Z,\zeta) \\
& (A,\gamma\circ t) \ar[ul]^{(\id,\tilde{t}^{-1})} \ar[ur]_{(t'\circ t, t^*\tilde{t}')} &
} \]
Thus we have a well-defined functor $\RO(G)^T\to \RO(G)^\Mackey$ given by including $\RO(G)^T$ as spans where the left leg is the identity.

It is clear that composition is unital and associative, so that we have a well defined category $\RO(G)^\Mackey$.  An $\RO(G)$-graded Mackey functor is then a functor out of this category.
\begin{defn}
An $\RO(G)$-graded Mackey functor is a finite product-preserving functor 
\[M\colon \RO(G)^\Mackey \to \Ab.\]
\end{defn}
\begin{remark}
The product in $\RO(G)^\Mackey$ is given by disjoint union of $G$ sets, and the condition that a graded Mackey functor be product preserving enforces the condition that $M(X\coprod Y,\chi\coprod \gamma)\cong M(X,\chi)\oplus M(Y,\gamma)$.  
\end{remark}

Now we can restate Theorem \ref{t:Mackey}.

\begin{thm} \label{t:Mackey_restated}
Let $E$ be a $G$-spectrum. Then $E$ determines an $\RO(G)$-graded Mackey functor
\[
 \upis(E)\colon \RO(G)^\Mackey \to \Ab
\]
sending $(X,\chi)$ to $\upis(E)(X,\chi)$.
\end{thm}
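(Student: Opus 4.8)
The plan is to leverage the two functors already built in this section: the restriction functor $\upis(E)\colon(\RO(G)^R)^\op\to\Ab$ and the transfer functor $\upis(E)\colon\RO(G)^T\to\Ab$. Given a span $(X,\chi)\xfrom{(r,\tilde r)}(A,\alpha)\xto{(t,\tilde t)}(Y,\gamma)$ representing a morphism of $\RO(G)^\Mackey$, I declare $\upis(E)$ of it to be the composite $(t,\tilde t)_*^\vee\circ(r,\tilde r)^*\colon\upis(E)(X,\chi)\to\upis(E)(Y,\gamma)$. Three things must then be checked: (i) this is independent of the chosen representative of the equivalence class of spans; (ii) it respects the pullback composition in $\RO(G)^\Mackey$, so that $\upis(E)$ is a functor; and (iii) the resulting functor preserves finite products. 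Items (i) and (iii) are essentially formal, while (ii) is the $\RO(G)$-graded Mackey double coset formula and is the crux.

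For (i) it suffices to treat the generating relation. A span isomorphism $\phi\colon(A_1,\alpha_1)\to(A_2,\alpha_2)$ is in particular an isomorphism of the underlying $G$-sets together with a natural isomorphism of the grading functors, so it induces a canonical isomorphism $\upis(E)(A_1,\alpha_1)\cong\upis(E)(A_2,\alpha_2)$, obtained just by relabelling the wedge summands of the representing bouquet of spheres and applying $\cF$ to the natural isomorphism. Since the Pontryagin--Thom collapse maps and the fold maps used to define restriction and transfer are natural with respect to isomorphisms of $G$-sets, this isomorphism is compatible with both legs of the spans, and hence $(t_1,\tilde t_1)_*^\vee\circ(r_1,\tilde r_1)^*=(t_2,\tilde t_2)_*^\vee\circ(r_2,\tilde r_2)^*$. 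The identity span clearly induces the identity map.

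For (ii), consider composable spans with middle objects $(A,\alpha)$ over $(X,\chi),(Y,\gamma)$ and $(B,\beta)$ over $(Y,\gamma),(Z,\zeta)$, and form the pullback $D=A\times_Y B$ with $\delta(a,b)=\beta(b)$. The left leg of the composite span is a composite in $\RO(G)^R$ and its right leg a composite in $\RO(G)^T$, so functoriality of restriction and of transfer separately reduces the claim to the single identity
\[
 (r',\tilde r')^*\circ(t,\tilde t)_*^\vee=(t'',\tilde t'')_*^\vee\circ(r'',\tilde r'')^*\colon\upis(E)(A,\alpha)\to\upis(E)(B,\beta),
\]
where $(r'',\tilde r'')\colon(D,\delta)\to(A,\alpha)$ and $(t'',\tilde t'')\colon(D,\delta)\to(B,\beta)$ are the maps from the definition of composition. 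I would verify this at the level of functors $\cB_G(-)\to\Spec$: unwinding the definitions, both sides send a natural transformation $\phi\colon S^\alpha\To\const_A(E)$ to a composite assembled from a Pontryagin--Thom collapse, $\phi$ itself, and fold maps, and it is enough to show that the resulting diagram of natural transformations commutes. The geometric content is that the Pontryagin--Thom collapse is compatible with base change along $r'\colon B\to Y$: a $G$-equivariant embedding of $A$ in $\coprod_y V_{\gamma(y)}$ pulls back to a $G$-equivariant embedding of $D$ in $\coprod_b V_{\gamma(r'(b))}\cong\coprod_b V_{\beta(b)}$, collapsing outside the $\epsilon$-balls commutes with the bouquet-of-spheres incarnation of $(r')^*$ precisely because $D$ is the pullback, and the fold maps over $D\to B$ match those over $A\to Y$ restricted along $r'$ because the square is cartesian. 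This is the classical double coset formula, carried out while keeping track of the $\fancyI^\op$-valued gradings via the identifications built into $\delta$, $\tilde r''$ and $\tilde t''$.

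For (iii), the product in $\RO(G)^\Mackey$ is disjoint union of $G$-sets with $\chi\coprod\gamma$ the evident functor on $\cB_G X\coprod\cB_G Y$, and $p_*^\vee(S^{\chi\coprod\gamma})=p_*^\vee(S^\chi)\vee p_*^\vee(S^\gamma)$; since $[-\vee-,E]_G\cong[-,E]_G\times[-,E]_G$ we obtain $\upis(E)(X\coprod Y,\chi\coprod\gamma)\cong\upis(E)(X,\chi)\oplus\upis(E)(Y,\gamma)$, and one checks that the span-induced maps respect this splitting. The main obstacle throughout is step (ii): seeing that the Pontryagin--Thom collapse and the fold maps genuinely match under base change — and that the auxiliary choices of embeddings and $\epsilon$-balls can be arranged to agree up to the stable equivalences of the preceding remarks — is where all the real work lies.
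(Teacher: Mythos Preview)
Your proposal is correct and follows essentially the same approach as the paper: both reduce to the single base-change identity $(r,\tilde r)^*\circ(t,\tilde t)_*^\vee=(t',\tilde t')_*^\vee\circ(r',\tilde r')^*$ on the pullback square, and both verify it by unwinding the two composites in terms of the Pontryagin--Thom collapse, $\phi$, and fold maps and observing that the relevant functors $r^*t_*^\vee(-)$ and $(t')_*^\vee(r')^*(-)$ are naturally isomorphic. The paper organizes this last step as three observations (matching the sphere functors, the constant-$E$ functors, and the maps to $\const_B(E)$), whereas you phrase it as base-change compatibility of the collapse and fold maps; the content is the same, and you are somewhat more explicit than the paper about the formal points (i) and (iii).
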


\begin{proof}  Note that by definition, $\upis(E)$ takes disjoint union in $\RO(G)$ to direct sum of abelian groups, and is thus finite product-preserving.
Since we have functors from $\RO(G)^R$ and $\RO(G)^T$ into $\RO(G)^\Mackey$, the only thing left to prove is that given a diagram
\[ \xymatrix{
 & (D,\delta) \ar[ld]_-{(r', \tilde{r}')} \ar[rd]^-{(t',\tilde{t}')} & \\
(A,\alpha) \ar[rd]_-{(t,\tilde{t})} & & (B,\beta) \ar[ld]^-{(r, \tilde{r})} \\
& (Y,\gamma)
} \]
defining the composite $(r,\tilde{r}) \circ (t,\tilde{t})$ in $\RO(G)^\Mackey$ the two maps
\[
 (r, \tilde{r})^* \circ (t,\tilde{t})_*^\vee, (t',\tilde{t}')_*^\vee \circ (r', \tilde{r}')^*\colon \upis(E)(A,\alpha) \to \upis(E)(B,\beta)
\]
agree.

The first map is given by sending $\phi\colon S^\alpha \Rightarrow \const_A(E)$ to the composite
\begin{multline*}  \hspace{1cm}S^\beta  \xToo{\tilde{r}}  r^* S^\gamma \xToo{r^* PT(\tilde{t})} r^* t_*^\vee S^\alpha 
   \xToo{r^*t_*^\vee \phi} \\ r^* t_*^\vee \const_A(E) 
 \Too  r^* \const_Y(E) = \const_B(E)\hspace{1cm}
\end{multline*} 
while the second map sends $\phi$ to
\begin{multline*}
 S^\beta\xToo{PT(\tilde{t}')}  (t')_*^\vee S^\delta \xToo{(t')_*^\vee (r')^*}  (t')_*^\vee (r')^* S^\alpha \xToo{(t')_*^\vee (r')^* \phi}  \\(t')_*^\vee (r')^* \const_A(E) = (t')_*^\vee \const_D(E) \To \const_B(E).
\end{multline*}
%
%
The result then follows from three observations. First, the functors $r^* t_*^\vee S^\alpha$ and $(t')_*^\vee (r')^* S^\alpha$ are naturally isomorphic, with both given by $b \mapsto \bigvee_{t(a)=r(b)} S^{\alpha(a)}$, and the two maps from $S^\beta$ are homotopic.

Second, the functors $r^* t_*^\vee \const_A(E)$ and $(t')_*^\vee (r')^* \const_A(E)$ are naturally isomorphic, and the two maps from $r^* t_*^\vee S^\alpha \cong (t')_*^\vee (r')^* S^\alpha$ agree.

And third, the two maps from $r^* t_*^\vee \const_A(E) \cong (t')_*^\vee (r')^* \const_A(E)$ to $\const_B(E)$ agree.
\end{proof}

\begin{remark}
The standard double coset formula for Mackey functors follows from the proof of Theorem \ref{t:Mackey_restated} just as in the standard definition of Mackey functors in terms of functors on the Burnside category.  Concretely, let $G/H$ and $G/K$ be $G$-orbits.  The usual formulation of the double coset formula is the equality
\[
 Res^{G}_{K} Tr^{G}_H=\sum_{g} Tr_{g^\inv Hg\cap K}^K c_g Res^H_{H\cap{}^gK}.
\]
Given maps  $(t,\tilde{t})\colon (G/H, \alpha)\to (G/G,\gamma)$  in $\RO(G)^T$ and $(r,\tilde{r})\colon(G/K,\beta)\to(G/G,\gamma)$ in $\RO(G)^R$, we can derive this equality by considering their composite in $\RO(G)^\Mackey$.  We calculate this composite via the pullback square
\[ \xymatrix{
 & (D,\delta) \ar[ld]_-{(r', \tilde{r}')} \ar[rd]^-{(t',\tilde{t}')} & \\
(G/H,\alpha) \ar[rd]_-{(t,\tilde{t})} & & (G/K,\beta) \ar[ld]^-{(r, \tilde{r})} \\
& (G/G,\gamma)
} \]
where $D= G/H\times_{G/G} G/K$ and $\delta=\beta\circ \pi_2$. 
By definition, the composite $(r,\tilde{r})^*\circ (t,\tilde{t})_*$ is
\[
 Res^{G}_{K} Tr^{G}_H.
\]
As in the proof of Theorem \ref{t:Mackey_restated}, this is equal to the composite $(t',\tilde{t}')_*\circ(r',\tilde{r}')^*$.  We identify this latter composite as the right hand side of the double coset formula as follows. 
 The pullback $D$ decomposes into $G$-orbits as
\[
 D= \coprod_{g\in G} G/(H\cap{}^{g}\!K)
\]
where ${}^g\!K$ denotes the conjugate $gKg^\inv$. Hence the composite $(t',\tilde{t}')_*\circ(r',\tilde{r}')^*$ is
\[
 \sum_{g} Tr_{g^\inv Hg\cap K}^K c_g Res^H_{H\cap{}^gK}.
\]
We can think of the conjugation $c_g$ as picking a different basepoint for each orbit---it is a different choice of the equivalence of categories in  Section  \ref{subsect:equivalenceofcat}.
\end{remark}

\section{Graded bispans and graded Tambara functors} \label{s:Tambara}
Recall that an ordinary Tambara functor consists of an abelian group $T(X)$ for each finite $G$-set $X$, and a morphism $T(X) \to T(Y)$ for each diagram
\[
 X \xfrom{r} A \xto{n} B \xto{t} Y.
\]
The map $n\colon A \to B$ determines a \emph{norm map} $n_*^\sma\colon T(A) \to T(B)$.  In this section, we define the $\RO(G)$-graded version of Tambara functors.

\subsection{The category of norm maps}
We define a category $\RO(G)^N$ with the same objects as $\RO(G)$ as follows.

\begin{defn}
Let $(A,\alpha)$ and $(B,\beta)$ be objects in $\RO(G)$. A map in $\RO(G)^N$ from $(A,\alpha)$ to $(B,\beta)$ is a pair $(n,\tilde{n})$ where $n\colon A \to B$ is a $G$-map and
\[
 \tilde{n}\colon \beta \Rightarrow n_*^\oplus \alpha 
\]
is a natural transformation of functors from $\cB_G B$ to $\fancyI^\op$. Here $n_*^\oplus \alpha\colon B \to \fancyI^\op$ is given by $n_*^\oplus \alpha (b) = \bigoplus_{n(a)=b} \alpha(a)$.

Composition in $\RO(G)^N$ is defined as follows:  Let $(m,\tilde{m})\colon (A,\alpha)\to (B,\beta)$ and $(n,\tilde{n})\colon (B,\beta)\to (C,\zeta)$ be maps in $\RO(G)^N$.  Then their composite is the pair $(n\circ m,\widetilde{n\circ m})$ where the natural transformation $\widetilde{n\circ m}\colon \zeta\to (n\circ m)^\oplus_*\alpha$ is the composite
\[ \zeta \xRightarrow{\tilde{n}} n^\oplus_*\beta \xRightarrow{n^\oplus_*\tilde{m}} n^\oplus_*m^\oplus_*\alpha \Rightarrow (n\circ m)^\oplus_*\alpha\]
where the final map is the natural isomorphism $n^\oplus_*\circ m^\oplus_*\approx (n\circ m)^\oplus_*$ arising from the symmetric monoidal structure on $\fancyI^\op$.  By \cite[Proposition A.29]{HHR}, such natural isomorphisms are compatible with composition.  That is, given composible maps of finite $G$-sets $l, m$ and $n$, the following diagram of natural isomorphisms commutes:
\[
\xymatrix{ n^\oplus_*\circ m^\oplus_*\circ l^\oplus_* \ar[r]\ar[d]& n^\oplus\circ (m\circ l)^\oplus_*\ar[d]\\
(n\circ m)^\oplus_*\circ l^\oplus_*\ar[r] &(n\circ m\circ l)^\oplus_*
}\]
From this compatibility, we deduce that composition in $\RO(G)^N$ is associative.
\end{defn}

\begin{remark}
Note that there are no maps from $(A,\alpha)$ to $(B,\beta)$ unless $[\beta(b)] = \bigoplus_{n(a)=b} [\alpha(a)]$ as virtual vector spaces. In the case when $A=G/H$ and $B=G/K$, with $H \leq K$ and $n\colon G/H \to G/K$ the canonical map, it follows that $[\beta(eK)] = N_H^K[\alpha(eH)]$ in the ordinary representation ring $RO(K)$.
\end{remark}

Because $\cF(-)$ is a symmetric monoidal functor from $\fancyI^\op$ to $\Spec$ it follows that $S^{n_*^\oplus \alpha}$ is isomorphic to $n_*^\sma S^\alpha$ and that a natural transformation $\beta \Rightarrow n_*^\oplus \alpha$ in $\fancyI^\op$ induces a natural transformation $S^\beta \Rightarrow n_*^\sma S^\alpha$ of functors from $\cB_G B$ to $\Spec$.

\begin{remark}
Given a map $(f, \tilde{f})\colon (Y,\gamma) \to (X,\chi)$ in $\RO(G)^N$ where $f$ is an isomorphism, the natural transformation $\tilde{f}$ takes the form $S^{\chi} \To f_*^\sma S^\gamma = (f^{-1})^* S^\gamma$, so we see that $(f,\tilde{f})$ gives the same data as a map $(X,\chi) \to (Y,\gamma)$ in the category $\RO(G)$. Hence $\RO(G)^N$ is also a natural generalization of $\RO(G)^\op$.
\end{remark}

\subsection{$\RO(G)$-graded bispans and Tambara functors} 
We now define the category $\RO(G)^{\Tambara}$ that is the domain of $\RO(G)$-graded Tambara functors. Again, the objects of $\RO(G)^{\Tambara}$ are the same as those of $\RO(G)$.
\begin{defn}
Let $(X,\chi)$ and $(Y,\gamma)$ be objects of $\RO(G)$.  A map in $\RO(G)^{\Tambara}$ from $(X,\chi)$ to $(Y,\gamma)$ is an equivalence class of bispans
\[ (X,\chi) \xleftarrow{(r,\tilde{r})} (A,\alpha) \xto{(n,\tilde{n})} (B,\beta) \xto{(t,\tilde{t})} (Y,\gamma).\]
Here $(r,\tilde{r})$ is a map in $\RO(G)^R$, $(n, \tilde{n})$ is a map in $\RO(G)^N$ and $(t,\tilde{t})$ is a map in $\RO(G)^T$.  The equivalence relation is generated by declaring two bispans to be equivalent if there is a commutative diagram on the level of $G$-sets and natural transformations
\[
\xymatrix{
&(A_1,\alpha_1) \ar[r]\ar[dl]\ar[dd]^{\cong} &(B_1, \beta_1)\ar[dr]\ar[dd]^{\cong} \\
(X,\chi) & & & (Y,\gamma)\\
& (A_2,\alpha_2)\ar[r]\ar[ul] & (B_2,\beta_2)\ar[ur]
}
\]
\end{defn}

Composition in $\RO(G)^{\Tambara}$ is given by a generalization of the composition formula in \cite{Ta93}. If 
\[ (X,\chi) \xleftarrow{(r,\tilde{r})} (A,\alpha) \xto{(n,\tilde{n})} (B,\beta)\xto{(t,\tilde{t})} (Y,\gamma).\]
and 
\[ (Y,\gamma) \xleftarrow{} (C,\xi)\xto{} (D,\delta) \xto{} (Z,\zeta)\]
are bispans in $\RO(G)^\Tambara$, then their composite is the bispan
\[ (X,\chi) \xleftarrow{} (A'',\alpha'') \xto{} (D',\delta') \xto{} (Z,\zeta)\]
defined by the following diagram:
\[\xymatrix{
 (A'',\alpha'')\ar[r]\ar[d]\ar@{}[dr]|*+[Fo]{1} & (C',\xi')\ar[r]\ar[d]\ar@{}[dr]|*+[Fo]{2} &(D',\delta')\ar[dr] \\
(A',\alpha')\ar[r]\ar[d]\ar@{}[dr]|*+[Fo]{3}& (B',\beta')\ar[r]\ar[d]\ar@{}[dr]|*+[Fo]{4} & (C,\xi)\ar[r]\ar[d] & (D,\delta) \ar[r]& (Z,\zeta)\\
(A,\alpha)\ar[r]\ar[d]& (B,\beta)\ar[r] & (Y,\gamma)\\
(X,\chi)
}\]
At the level of $G$-sets, the squares 1, 3, and 4 in this diagram are pullback squares  and the pentagon 2 is an ``exponential diagram,'' as defined in \cite[Section 1]{Ta93} or below in Section \ref{sec:moving-norm-map-transfer}. The gradings are determined as follows.  We use $pr$ to denote projection from a pullback to the component specified in the subscript.  Square 4 is given by moving a restriction past a transfer map, as described in Section \ref{ss:ROGMackdefn} above, so that $\beta'$ is the functor $pr_C^*\xi$.  Squares 1 and 3 are both instances of moving a restriction past a norm map, as described in Section \ref{sec:moving-restr-map-norm} below, so that $\alpha'=pr_A^*\alpha$ and $\alpha''=pr_{A'}^*\alpha$.  The definition of the gradings in the exponential diagram is made explicit in Section \ref{sec:moving-norm-map-transfer} below.

Indeed, the descriptions of moving restriction maps past norm maps and norm maps past transfer maps given in the next two sections prove the following proposition.  Compare with the non-graded version in \cite[Proposition 7.3]{Ta93}.
\begin{prop} The category $\RO(G)^{\Tambara}$ is generated by maps of the form
\begin{enumerate}
\item \emph{(Restriction type)} $(X,\chi) \leftarrow (A,\alpha) = (A,\alpha) = (A,\alpha)$ 
\item \emph{(Norm type)} $(A,\alpha) =  (A,\alpha) \to (B,\beta) =(B,\beta)$ 
\item \emph{(Transfer type)} $(B,\beta) = (B,\beta) =(B,\beta)\to (Y,\gamma)$
\end{enumerate}
under relations given by the above diagram.
\end{prop}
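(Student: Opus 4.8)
The plan is to adapt Tambara's argument for \cite[Proposition 7.3]{Ta93} to the graded setting. The first and easiest step is \emph{generation}. Given any bispan
\[ (X,\chi) \xleftarrow{(r,\tilde r)} (A,\alpha) \xto{(n,\tilde n)} (B,\beta) \xto{(t,\tilde t)} (Y,\gamma), \]
I claim it equals, in $\RO(G)^\Tambara$, the composite of the restriction-type map with left leg $(r,\tilde r)$, followed by the norm-type map with norm leg $(n,\tilde n)$, followed by the transfer-type map with transfer leg $(t,\tilde t)$. To verify this one simply feeds these three special bispans into the composition formula: in the big defining diagram each of the pullback squares $1$, $3$, $4$ and the exponential pentagon $2$ has a leg that is an identity and hence degenerates, the whole diagram collapses, and one reads off precisely the original bispan $(r,\tilde r,n,\tilde n,t,\tilde t)$. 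Consequently every morphism of $\RO(G)^\Tambara$ is such a composite, which is the ``generated by'' half of the statement.

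The second step is to show that all relations among these generators are consequences of three elementary moves: moving a restriction past a transfer (Section \ref{ss:ROGMackdefn}), moving a restriction past a norm (Section \ref{sec:moving-restr-map-norm}), and moving a norm past a transfer via an exponential diagram (Section \ref{sec:moving-norm-map-transfer}). First, composites of generators of the same type reduce to composition in $\RO(G)^R$, $\RO(G)^N$, or $\RO(G)^T$ respectively; this is straightforward and entirely analogous to the compatibility check for $\RO(G)^T \to \RO(G)^\Mackey$ already carried out in Section \ref{ss:ROGMackdefn}. It then remains to recover the full composition formula for two arbitrary bispans from the moves. Factor each of the two bispans being composed as (restriction)$\,\cdot\,$(norm)$\,\cdot\,$(transfer) as in the first step, obtaining a string of six generators $\rho\cdot\nu\cdot\tau\cdot\rho'\cdot\nu'\cdot\tau'$. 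Now normalize by pushing every restriction-type generator to the left and every transfer-type generator to the right: rewrite the inner $\tau\cdot\rho'$ using ``restriction past transfer,'' then move the new restriction leftward past $\nu$ using ``restriction past norm'' and move $\nu'$ leftward past the new transfer using an exponential diagram, iterating until the string is back in standard (restriction)$\,\cdot\,$(norm)$\,\cdot\,$(transfer) form. The assertion of the proposition is precisely that this normalization is well defined and that the $G$-sets and gradings it produces are exactly $A''$, $B'$, $C'$, $D'$ and $\alpha''$, $\beta'$, $\xi'$, $\delta'$ of squares $1$--$4$ and pentagon $2$ in the defining diagram.

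The main work, and the only real obstacle, is this last verification: that iterating the three elementary moves reassembles the specific diagram used to define composition in $\RO(G)^\Tambara$, and that the outcome is independent of the order in which moves are applied. At the level of underlying $G$-sets this is exactly Tambara's Proposition 7.3 together with the surjection-then-injection factorization of maps of finite $G$-sets and the associated exponential diagrams, so the new content is carrying the $\fancyI^\op$-valued gradings along coherently. Fortunately the gradings are determined by the $G$-set data: a restriction or norm map with a fixed underlying $G$-map determines its source grading from its target grading up to canonical isomorphism in $\fancyI^\op$, and a transfer likewise determines its source from its target, so once the underlying $G$-set diagram commutes the natural-transformation components are forced and automatically compatible. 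The one genuine coherence input needed is the compatibility of the monoidal isomorphisms $n^\oplus_*\circ m^\oplus_* \cong (n\circ m)^\oplus_*$ used to define $\RO(G)^N$, which is \cite[Proposition A.29]{HHR}; combined with associativity of bispan composition, checked as in \cite{Ta93}, this finishes the argument.
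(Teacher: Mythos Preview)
Your proposal is correct and follows the same approach the paper takes: the paper does not give a standalone proof but simply asserts that the descriptions of the three elementary moves (restriction past transfer in \S\ref{ss:ROGMackdefn}, restriction past norm in \S\ref{sec:moving-restr-map-norm}, norm past transfer in \S\ref{sec:moving-norm-map-transfer}) together with the non-graded \cite[Proposition 7.3]{Ta93} establish the result, and you have spelled out exactly that strategy in detail. One minor slip: for norm maps it is the \emph{target} grading that is determined (up to isomorphism) by the source grading and the underlying $G$-map, not the other way around, but this does not affect your argument since the point is only that the gradings in the composition diagram are forced by the $G$-set data.
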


\subsection{Moving a restriction map past a norm map}\label{sec:moving-restr-map-norm}
First we consider the following. Given a norm map $(n, \tilde{n})\colon (A,\alpha) \to (B,\beta)$ in $\RO(G)^N$ and a restriction map $(r,\tilde{r})\colon (X,\chi) \to (B,\beta)$ in $\RO(G)^R$ we define the composite  $(r,\tilde{r})\circ(n,\tilde{n})$  to be the bispan
\[
 (A,\alpha) \xfrom{(r',\tilde{r}')} (D,\delta) \xto{(n',\tilde{n}')} (X,\chi) \xto{=} (X,\chi)
\]
where $D$ is the pullback $D=A \times_B X$ and $\delta\colon \cB_G D \to \fancyI^\op$ is given as follows. On an object $d=(a,x) \in D$ we set $\delta(a,x) = \alpha(a)$, and on a morphism $g\colon (a,x) \to (ga, gx)$ we set $\delta(g)$ to be $\alpha(g)\colon \alpha(a) \to \alpha(ga)$.

The natural transformation $\tilde{r}'\colon \delta \Rightarrow (r')^* \alpha$ is the identity on objects, while the natural transformation $\tilde{n}'\colon \chi \Rightarrow (n')^\oplus_* \delta$ is given on objects by the composite
\[
 \chi(x) \xto{\tilde{r}} \beta(r(x)) \xto{\tilde{n}} \bigoplus_{n(a)=r(x)} \alpha(a) = \bigoplus_{d \in D, n'(d)=x} \delta(d).
\]
It follows that $(r',\tilde{r}')$ is in $\RO(G)^R$ and $(n',\tilde{n}')$ is in $\RO(G)^N$.

\begin{remark}
Note that here, as in the definition of the composite of a transfer and a restriction map, the composite uses the pullback $A\times_B X$, but the transfer/restriction case uses a different grading.  In the present case, we project to $A$, the source of the norm map; in the transfer/restriction case, we project to the source of the restriction map.  The difference reflects the different requirements on the gradings in these two cases.
\end{remark}

\subsection{Moving a norm map past a transfer map}\label{sec:moving-norm-map-transfer}
This is the most complicated composition rule in $\RO(G)^\Tambara$. Given $(t,\tilde{t})\colon (C,\xi) \to (A,\alpha)$ in $\RO(G)^T$ and $(n,\tilde{n})\colon (A,\alpha) \to  (B,\beta)$ in $\RO(G)^N$ we define the composite $(n,\tilde{n})\circ(t,\tilde{t})$ to be the bispan
\[
 (C,\xi) \xfrom{(r,\tilde{r})} (\otherE,\epsilon) \xto{(n',\tilde{n}')} (D,\delta) \xto{(t',\tilde{t}')} (B,\beta)
\]
defined in terms of the ``exponential diagram''
\[\xymatrix{ (\otherE,\epsilon)\ar[d]_{(r,\tilde{r})}\ar[r]^-{(n',\tilde{n}')} & (D,\delta)\ar[dr]^-{(t',\tilde{t}')}\\
(C,\xi)\ar[r]^-{(t,\tilde{t})} & (A,\alpha)\ar[r]^-{(n,\tilde{n})} & (B,\beta)}
\]
We define
\begin{eqnarray*}
 D & = & \{(b,s) \mid b \in B, \,\, s\colon n^{-1}(b) \to C \textnormal{ is a section of $t$}\} \\
 \delta(b,s) & = & \beta(b) \\
 \otherE & = & \{(a,b,s) \mid (b,s) \in D, \,\, a \in n^{-1}(b) \} \\
 \epsilon(a,b,s) & = & \xi(s(a))
\end{eqnarray*}
The maps $r\colon \otherE \to C$, $n'\colon \otherE \to D$ and $t'\colon D \to B$ are the obvious ones, and the natural transformations are defined as follows.

We define $\tilde{r}\colon \epsilon \To r^* \xi$ and $\tilde{t}'\colon \delta \Rightarrow \beta \circ t'$ to be the identity on objects. The natural transformation $\tilde{n}'\colon \delta \Rightarrow (n')_*^\oplus \epsilon$ is given on objects by maps
\[
 \delta(b,s) = \beta(b) \to \bigoplus_{n'(a,b,s)=(b,s)} \epsilon(a,b,s) = \bigoplus_{n(a)=b} \xi(s(a))
\]
which we now define.

The natural transformation $\tilde{n}$ gives a map $\beta(b) \to \bigoplus_{n(a)=b} \alpha(a)$, and the natural transformation $\tilde{t}$ gives an isomorphism $\xi(s(a)) \to \alpha(a)$, so we can define the $(b,s)$'th component of $\tilde{n}'$ to be the composite
\[
 \beta(b) \xto{\tilde{n}_b} \bigoplus_{n(a)=b} \alpha(a) \xto{\oplus \tilde{t}_a^{-1}} \bigoplus_{n(a)=b} \xi(s(a)).
\]
Notice that the set of $a \in A$ with $n(a)=b$ is in bijection with the set of $(a,b,s) \in E$ with $n'(a,b,s)=(b,s)$.

The behavior on morphisms should be clear from this description. and it follows from the definitions that indeed $(r,\tilde{r})$ is in $\RO(G)^R$, $(n',\tilde{n}')$ is in $\RO(G)^N$ and $(t',\tilde{t}')$ is in $\RO(G)^T$.

\subsection{Graded Tambara functors}
Now that we have a category $\RO(G)^\Tambara$, we can define $\RO(G)$-graded Tambara functors.  This requires more care than defining $\RO(G)$-graded Mackey functors: in particular, we cannot simply consider functors from $\RO(G)^\Tambara$ to abelian groups because norm maps are not maps of abelian groups.  Fundamentally, this is because, for a ring $R$, the multiplication map $R\times R\to R$ is not a homomorphism of abelian groups, but rather just a map of sets.  Since the norm maps are a generalization of multiplication, we cannot require them to be maps in the category of abelian groups either.

The solution is to consider product-preserving functors from $T\colon \RO(G)^\Tambara$ to the category of sets and then to observe that the image $T(X,\chi)$ of any object in $\RO(G)^\Tambara$ inherits a commutative monoid structure.  In analogy with \cite{Ta93}, we could call such functors ``graded semi-Tambara functors;''
because they are graded semi-ring version of Tambara functors.  In the nongraded case, Tambara functors are simply semi-Tambara functors where each commutative monoid $T(X)$ is in fact an abelian group, which motivates Definition \ref{defngradedTambarafunctor}.

\begin{lemma} Let $T\colon \RO(G)^\Tambara \to \Set$ be a finite product preserving functor.  For every finite $G$-set $X$ and each grading $\chi\colon B_GX\to \fancyI^\op$, the fold map $\nabla\colon X\coprod X\to X$ induces a commutative monoid structure on $T(X,\chi).$
\end{lemma}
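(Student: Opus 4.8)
The plan is to obtain the monoid structure on $T(X,\chi)$ by pushing forward through $T$ the commutative-monoid structure already carried by $X$ under the fold map $\nabla\colon X\coprod X\to X$, exploiting that $T$ preserves finite products and that the categorical product in $\RO(G)^\Tambara$ is disjoint union of $G$-sets with the evident ``disjoint'' grading.

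First I would record the morphisms of $\RO(G)^\Tambara$ that are in play. Equip $X\coprod X$ with the grading $\chi\coprod\chi=\nabla^*\chi$, and take the grading-data natural transformation to be the identity; just as for the Pontryagin--Thom collapse maps of Section~\ref{s:orthogonalspectra}, this exhibits $(\nabla,\id)$ as a map in $\RO(G)^T$, hence a transfer-type morphism $m\colon(X\coprod X,\chi\coprod\chi)\to(X,\chi)$ of $\RO(G)^\Tambara$. Likewise $\emptyset\to X$ underlies a transfer-type morphism $u\colon(\emptyset,!)\to(X,\chi)$, and $(\emptyset,!)$ is a zero object of $\RO(G)^\Tambara$. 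Since $T$ preserves finite products, and the verification that disjoint union of $G$-sets with disjoint gradings is the categorical product goes through for $\RO(G)^\Tambara$ exactly as for $\RO(G)^\Mackey$ (decompose bispans along orbit decompositions), the functor $T$ gives natural isomorphisms $T(X\coprod X,\chi\coprod\chi)\cong T(X,\chi)\times T(X,\chi)$ and $T(\emptyset,!)\cong *$. I would then define the multiplication $\mu\colon T(X,\chi)\times T(X,\chi)\to T(X,\chi)$ as $T(m)$ transported along the first isomorphism, and the unit $e\colon *\to T(X,\chi)$ as $T(u)$ transported along the second.

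Next I would verify the monoid axioms, each of which is the image under $T$ of a commuting diagram in $\RO(G)^\Tambara$ lifting a transparent identity among fold maps of $G$-sets: $\nabla\circ(\nabla\coprod\id_X)=\nabla\circ(\id_X\coprod\nabla)$ on $X\coprod X\coprod X$, the identity $\nabla\circ\tau=\nabla$ for $\tau$ the swap of summands, and $\nabla\circ(\id_X\coprod j)=\id_X$ for $j\colon\emptyset\to X$. Each such lift commutes on the nose, because every grading-data natural transformation that occurs is an identity; the only thing requiring proof is that composing the corresponding transfer-type morphisms in $\RO(G)^\Tambara$ reproduces composition of the underlying $G$-maps, which is the transfer--transfer instance of the composition law---a plain pullback of $G$-sets---and was already invoked in the proof of Theorem~\ref{t:Mackey_restated}. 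One then reads off, from the product-preservation isomorphisms, that $\tau$ becomes the symmetry of $T(X,\chi)\times T(X,\chi)$ and that the two coprojections become the two summand inclusions, so that the $T$-images of the three displayed diagrams are precisely associativity, commutativity and unitality for $(T(X,\chi),\mu,e)$.

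I expect the part needing most care to be the bookkeeping between the $G$-set level and the grading level rather than any isolated computation: one must confirm that attaching the (identity) grading data to the fold diagrams never clashes with the equivalence relation on bispans, that the product being used is genuinely the disjoint-union one, and that $(\nabla,\id)$ truly meets the conditions defining a map in $\RO(G)^T$ (equivalently, that the needed equivariant collapse map exists, if necessary after the standard enlargement of gradings). Since only transfer-type morphisms along fold maps appear, the composition law is used only in its mild pullback form and never through exponential diagrams, so this bookkeeping---while tedious---is routine.
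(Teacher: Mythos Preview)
Your proposal is correct and follows essentially the same approach as the paper: both arguments exhibit $(X,\chi)$ as a commutative monoid object in $\RO(G)^\Tambara$ via the transfer-type morphism induced by the fold map $(\nabla,\id)$ together with the transfer along $\emptyset\to X$ as unit, and then transport this structure through the product-preserving functor $T$. Your write-up is in fact more explicit than the paper's, which simply asserts that it is ``straightforward to check'' the monoid axioms, whereas you spell out the relevant identities among fold maps and note that only the pullback form of the composition law is needed.
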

\begin{proof}
Notice that for any $G$-sets $X$ and $Y$, the translation category $B_G(X\coprod Y)$ is the coproduct of categories $B_GX\coprod B_GY$.  The grading $\chi$ thus induces a grading $\chi\coprod\chi$ on $X\coprod X$.  Let $\nabla\colon X\coprod X\to X$ be the fold map and observe that $\nabla^*\chi=\chi \coprod \chi$.  We therefore have a map
\[ (X\coprod X,\chi\coprod\chi)\xto{(\nabla, \id)} (X,\chi)\]
in $\RO(G)^T$, which we will continue to call the ``fold map.''  This fold map then induces a map in $\RO(G)^\Tambara$
\[ (X,\chi)\coprod (X,\chi) \leftarrow (X,\chi)\coprod (X,\chi) \to (X,\chi)\coprod (X,\chi)\xto{(\nabla,\id)}(X,\chi)\]
which gives an addition-type map on $(X,\chi)$.  
The inclusion of the empty set $i\colon \emptyset \to X$ induces a unit map 
\[(\emptyset,\emptyset)\leftarrow (\emptyset,\emptyset)\to (\emptyset,\emptyset)\xto{(i,\id)} (X,\chi)\]
and it is straightforward to check that these maps give $(X,\chi)$ the structure of a commutative monoid object in $\RO(G)^\Tambara$.   Hence the image of $(X,\chi)$ under the product-preserving functor $T$ is a commutative monoid object in $\Set$, i.e.,\,a commutative monoid.
\end{proof}

\begin{defn}\label{defngradedTambarafunctor}
An $\RO(G)$-graded Tambara functor is a finite product-preserving functor
\[ T\colon \RO(G)^\Tambara \to \Set\]
such that the inherited commutative monoid structure on each $(X,\chi)$ in $\RO(G)^\Tambara$ is in fact an abelian group.
\end{defn}

\subsection{Norm multiplication}
Recall that
\[
 \constant_A(E)\colon \cB_G A \to \Spec
\]
is the functor that sends any $a \in \cB_G A$ to $E$, and that $\upis(E)(A,\alpha)$ is canonically isomorphic to the homotopy classes of natural transformations from $S^\alpha$ to $\constant_A(E)$, taken with respect to the model structure on the category $Fun(\cB_G A,\Spec)$. Given a transformation $\phi\colon S^\alpha \Rightarrow \constant_A(E)$, the map $(A,\alpha)\xto{(n,\tilde{n})} (B,\beta)$ gives a composite
\[
 S^\beta \Rightarrow n_*^\sma S^\alpha \Rightarrow n_*^\sma \constant_A(E)
\]
of natural transformations. In order to produce an element of $\upis(E)(B,\beta)$ we would like a natural transformation whose codomain is $\constant_B(E)$. This motivates the following definition.

\begin{defn} \label{d:normmult}
A \emph{norm multiplication} on a $G$-spectrum $E$ is, for each $n\colon A \to B$,  a morphism 
\[
 \mu_A^B\colon n_*^\sma \constant_A(E) \Rightarrow \constant_B(E)
\]
in the homotopy category of $Fun(\cB_GB,\Spec)$ as described in \S \ref{subsect:relatingtoHhtpygrps}.  These morphisms must  satisfy the following properties.
\begin{enumerate}
 \item Given $n\colon A \to B$ and $n'\colon B \to C$ the composite $\mu_B^C \circ (n')_*^\sma(\mu_A^B)$ is equivalent to $\mu_A^C$ in the homotopy category of $Fun(\cB_GC,\Spec)$.
 \item The natural transformations are stable under pullback. 
\end{enumerate}
\end{defn}

Note that we are only asking for maps in the homotopy category, which makes sense because $n_*^\sma$ is a left derived functor. For example, if $G=\{e\}$, the homotopy category of $Fun(\cB_{\{e\}} X,\Spec)$ is the category $Fun(\cB_{\{e\}} X,\hoSpec)$ and so we are asking for a homotopy associative and homotopy commutative multiplication.

The second condition in Definition \ref{d:normmult} says that if
\[ \xymatrix{
 A' \ar[r]^-{n'} \ar[d]_i & B' \ar[d]^j \\
 A \ar[r]^-n & B
} \]
is a pullback diagram then $\mu_{A'}^{B'}$ is the natural transformation 
\[
 (n')_*^\sma \constant_{A'}(E) = j^* n_*^\sma \constant_A(E) \stackrel{j^* \mu_A^B}{\Longrightarrow} j^* \constant_B(E) = \constant_{B'}(E).
\]

\begin{example}\label{ex:HHRnorm}
Suppose $H \leq K$ and $n\colon G/H \to G/K$ is the canonical map. Then $n_*^\sma \constant_{G/H}(E)$ evaluated at $eK$ is the Hill--Hopkins--Ravenel norm $N_H^K E$, so in this case the natural transformation amounts to a map $N_H^K E \to E$.
\end{example}

\begin{example}\label{ex:usualmult}
Suppose we start with the map $n\colon G/G \coprod G/G \to G/G$. Then $n_*^\sma \constant_{G/G \coprod G/G}(E)$ evaluated at $G/G$ is $E \sma E$, so in this case the natural transformation amounts to a $G$-map $E \sma E \to E$.

Applying Property (1) of Definition \ref{d:normmult} to the precomposition of $n$ and the twist map $G/G \coprod G/G \xto{\tau} G/G \coprod G/G$  shows that the $G$-map $E \sma E \to E$ is invariant under the twist map. Because this all takes place up to homotopy this means that $E$ has a homotopy commutative multiplication. Homotopy associativity and unitality follows by considering similar diagrams.
\end{example}

\begin{remark}
By decomposing $G$-sets into orbits, we can write any map $n\colon A\to B$ of $G$-sets as a composite of maps of the type of Example~\ref{ex:HHRnorm} and Example \ref{ex:usualmult}.  Thus the existence of norm multiplications for a spectrum $E$ is equivalent to the existence of compatible multiplications $N^K_H(i^*_H E)\to i^*_K E$ for all subgroups $H, K\leq G$ and a usual multiplication map $E\sma E\to E$ in the homotopy category of orthogonal $G$-spectra. 
\end{remark}

\begin{example}
Consider the pullback diagram
\[ \xymatrix{
 G/e \coprod G/e \ar[r] \ar[d] & G/e \ar[d] \\
 G/G \coprod G/G \ar[r] & G/G
} \]
The norm multiplication corresponding to the bottom map amounts to a $G$-map $E \sma E \to E$ and the norm multiplication corresponding to the top map amounts to a non-equivariant map $E \sma E \to E$. The second condition in the definition of a norm multiplication then implies that the non-equivariant map $E \sma E \to E$ is the underlying map of the $G$-equivariant map $E \sma E \to E$.

Another way of conceptualizing this compatibility is as follows.  The norm multiplications induced by the pullback diagram above yield the following diagram of maps of spectra:
\[\xymatrix{
N^G_e(i^*_e E)\sma N^G_e(i^*_e E) \ar[d]\ar[r] & N^G_e(i^*_e E)\ar[d]\\
E\sma E \ar[r] & E}
\]
The right vertical map is the Hill--Hopkins--Ravenel norm and the left vertical map is the smash product of the norm with itself, whereas the horizontal maps are ordinary multiplications.  Hence this diagram tells us that the norm $N^G_e(i^*_e E)\to E$ is a map of commutative monoids.\footnote{Thanks to Mike Hill for pointing out this alternate interpretation.}
\end{example}

\begin{example}  Any commutative orthogonal $G$-ring spectrum $E$ has norm multiplication. This follows from the fact that in the category of commutative orthogonal ring spectra the norm, as constructed by Hill--Hopkins--Ravenel, is left adjoint to the restriction functor \cite[Corollary A.56]{HHR}.  The counit of this adjunction is a map 
\[
 N^G_H i^*_H E\to E
\]
which produces the norm multiplication.  Indeed, this adjunction is a special case of the adjunction of Proposition \ref{p:smashad} (cf. \cite[\S A.3.5]{HHR}).  To be explicit, let $n\colon A\to B$ be a map of finite $G$-sets, and consider the adjunction $n^\sma_* \dashv n^*$ of Proposition~\ref{p:smashad}.  The identity map $\const_A(E) \to n^*\const_B(E)$ has adjunct the desired norm multiplication $n^\sma_*\const_A E\to \const_B E$.  Compatibility condition (1) is a standard property of adjunctions. Condition (2) follows from inspection of the definitions of the counit in the adjunction of Proposition \ref{p:smashad}.
\end{example}

\begin{remark} \label{r:Ninfinity}
As discussed in \cite{BlHi13}, there are multiple generalizations of $E_\infty$ operads to the $G$-equivariant context, known as $N_\infty$ operads. Given an $N_\infty$ operad $\cO$, there is a concomitant notion of an $\cO$-admissible $H$-set for each $H \leq G$.

Given a collection of admissible $H$-sets for each $H \leq G$ satisfying the conditions spelled out in \cite[Section 4]{BlHi13}, we say that $n\colon A \to B$ is admissible if each $n^{-1}(b)$ is an admissible $G_b$-set. If we only use admissible norm maps we arrive at the notion of an incomplete $\RO(G)$-graded Tambara functor, and we could modify Definition \ref{d:normmult} by only asking for admissible norm multiplication.

Then any $\cO$-algebra $E$ has $\cO$-admissible norm multiplication, and given a $G$-spectrum $E$ with admissible norm multiplication the obvious generalization of Theorem \ref{t:Tambara_restated} below still holds. We have avoided the additional generality in an attempt at keeping the paper readable.
\end{remark}

Now we can prove Theorem \ref{t:Tambara}, which we restate here for convenience.

\begin{thm} \label{t:Tambara_restated}
Let $E$ be an orthogonal $G$-spectrum with a norm multiplication. Then $E$ determines an $\RO(G)$-graded Tambara functor
\[
 \upis(E)\colon \RO(G)^\Tambara \to \Ab.
\]
sending $(X,\chi)$ to $\upis(E)(X,\chi)$.
\end{thm}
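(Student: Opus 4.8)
The plan is to verify that the assignment $(X,\chi) \mapsto \upis(E)(X,\chi)$, which Theorem~\ref{t:Mackey_restated} already shows is a Mackey functor, extends to a product-preserving functor $\RO(G)^\Tambara \to \Set$ whose values are the abelian groups $\upis(E)(X,\chi)$. By the proposition describing generators and relations for $\RO(G)^\Tambara$, it suffices to define the map induced by a norm map $(n,\tilde n)\colon (A,\alpha)\to(B,\beta)$ and then check functoriality against the three composition rules: moving a restriction past a norm, moving a norm past a transfer, and composition of norms with norms. First I would define, given $\phi\colon S^\alpha \Rightarrow \const_A(E)$, the element $(n,\tilde n)_*^\sma(\phi)$ to be the homotopy class of the composite
\[
 S^\beta \xToo{\tilde n} n_*^\sma S^\alpha \xToo{n_*^\sma \phi} n_*^\sma \const_A(E) \xToo{\mu_A^B} \const_B(E),
\]
using that $n_*^\sma$ is a left derived functor (so it is defined on the homotopy category) and that $\mu_A^B$ is supplied by the norm multiplication. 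One then checks this is well defined on homotopy classes and compatible with composition of norm maps: this is exactly Property~(1) of Definition~\ref{d:normmult}, combined with the compatibility of the natural isomorphisms $n_*^\sma m_*^\sma \cong (nm)_*^\sma$ used to define composition in $\RO(G)^N$.

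Next I would handle the two mixed composition rules. For moving a restriction map past a norm map (Section~\ref{sec:moving-restr-map-norm}), the relevant pullback square $D = A\times_B X$ and Property~(2) of Definition~\ref{d:normmult} say precisely that $\mu_D^X = j^* \mu_A^B$ along the square, so chasing $\phi$ around the two sides of the square and using base change $ (n')_*^\sma (r')^* \cong r^* n_*^\sma$ for the indexed smash product (the ``distributivity'' already implicit in Proposition~\ref{p:smashad} and the constructions of \S\ref{ss:wedgeandsmash}) shows the two composites agree. For moving a norm map past a transfer map (Section~\ref{sec:moving-norm-map-transfer}), I would work with the exponential diagram: the key point is the distributivity isomorphism identifying $n_*^\sma t_*^\vee S^\xi$ with $(t')_*^\vee (n')_*^\sma S^\epsilon$ over $B$, which at the level of $G$-sets is exactly the exponential diagram, and at the level of spectra is the interaction between the indexed wedge and indexed smash; one then checks that the Pontryagin--Thom collapse maps and the norm multiplication maps $\mu$ are compatible under this identification. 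The naturality of $\mu$ under pullback (Property~(2)) again supplies the needed compatibility for the restriction legs, and Property~(1) handles the iterated norms appearing when one decomposes the general exponential diagram into orbit-wise pieces.

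Having established these three compatibilities, functoriality on all of $\RO(G)^\Tambara$ follows from the generators-and-relations proposition together with the already-proved Mackey-functor statement (Theorem~\ref{t:Mackey_restated}), which gives the restriction/transfer relations. Product-preservation is immediate since $\upis(E)$ sends disjoint unions of $G$-sets to direct sums, and the commutative-monoid structure on each $\upis(E)(X,\chi)$ induced by the fold map agrees with the usual abelian group structure on the homotopy group $\upis(E)(X,\chi)$ — this is essentially Example~\ref{ex:usualmult} applied fiberwise, using that the fold map on spheres is the Pontryagin--Thom addition map of \S\ref{subsect:equivorthospec} and that $\mu$ restricted to the diagonal recovers ordinary multiplication. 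In particular each $\upis(E)(X,\chi)$ is an abelian group, so by Definition~\ref{defngradedTambarafunctor} we obtain an $\RO(G)$-graded Tambara functor, which we may record as landing in $\Ab$.

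I expect the main obstacle to be the norm-past-transfer rule: verifying that the exponential diagram at the level of $G$-sets really does induce, at the level of spectra, a coherent distributivity isomorphism $n_*^\sma t_*^\vee S^\xi \simeq (t')_*^\vee (n')_*^\sma S^\epsilon$ compatible with both the Pontryagin--Thom collapse maps $PT(\tilde t)$ and the norm multiplication $\mu$, and then untangling the homotopies so that the two composites $\upis(E)(C,\xi) \to \upis(E)(B,\beta)$ literally agree. This requires a careful analysis of the indexed monoidal product constructions of \cite{HHR} in the presence of the $\fancyI^\op$-grading, and keeping track of the fact that everything is only defined up to homotopy; the bookkeeping, rather than any single conceptual difficulty, is where the real work lies. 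A secondary subtlety is checking that the derived functor $n_*^\sma$ genuinely interacts with the (non-cofibrant) constant functor $\const_A(E)$ as the formulas suggest, which is why the norm multiplication is phrased in the homotopy category from the outset.
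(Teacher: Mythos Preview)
Your proposal is correct and follows essentially the same approach as the paper: define $(n,\tilde n)_*^\sma$ via the composite $S^\beta \Rightarrow n_*^\sma S^\alpha \Rightarrow n_*^\sma \const_A(E) \Rightarrow \const_B(E)$, then verify the restriction-past-norm and norm-past-transfer relations by reducing each to a base-change/distributivity isomorphism on the sphere and constant functors together with Properties~(1) and~(2) of the norm multiplication. The paper organizes each of these two checks into three parallel observations (isomorphism of source functors, isomorphism of intermediate functors, agreement of the maps into $\const(E)$), which is exactly the bookkeeping you anticipate as the main obstacle; you additionally make explicit the norm--norm compatibility and the identification of the fold-map monoid structure with the usual abelian group structure, which the paper leaves implicit.
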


\begin{proof}
There are only two things left to prove. First, we need to prove that our definition of $\upis(E)$ respects composition of norm maps and restriction maps. Suppose we have a norm map $(n,\tilde{n})\colon (A,\alpha)\to (B,\beta)$ and a restriction map $(r,\tilde{r})\colon (X,\chi)\to (B,\beta)$ as in Section \ref{sec:moving-restr-map-norm}.  We need to prove that the two composites
\[
 (r,\tilde{r})^* \circ (n,\tilde{n})_*^\sma,\ (n',\tilde{n}')_*^\sma \circ (r', \tilde{r}')^*\colon \upis(E)(A,\alpha) \to \upis(E)(X,\chi)
\]
agree.

The first composite is given by sending a natural transformation $\phi\colon S^\alpha \To \const_A(E)$ to the composite
\begin{multline*}
\hspace{1cm} S^\chi \xToo{\tilde{r}}  r^* S^\beta \xToo{r^* n_*^\sma}  r^* n_*^\sma S^\alpha 
 \xToo{r^* n_*^\sma \phi} \\ r^* n_*^\sma \const_A(E)  
 \xToo{r^* \mu_A^B}  r^* \const_B(E)= \const_X(E)\hspace{1cm}
\end{multline*}
while the other sends $\phi$ to
\begin{multline*}
 S^\chi \xToo{\tilde{n}'}  (n')_*^\sma S^\delta \xToo{(n')_*^\sma \tilde{r}'}  (n')_*^\sma (r')^* S^\alpha \xToo{(n')_*^\sma (r')^* \phi} \\ (n')_*^\sma (r')^* \const_A(E)  = (n')_*^\sma \const_D(E) \xToo{\mu_D^X}  \const_X(E)
\end{multline*}

It follows that the composites are equal from three observations. First, the functors $r^* n_*^\sma S^\alpha$ and $(n')_*^\sma (r')^* S^\alpha$ are naturally isomorphic, with both given by $x \mapsto \sma_{n(a)=r(x)} S^{\alpha(a)}$, and the two maps from $S^\chi$ are isomorphic.

Second, the functors $r^* n_*^\sma \const_A(E)$ and $(n')_*^\sma (r')^* \const_A(E)$ are naturally isomorphic, and the two maps from $r^* n_*^\sma S^\alpha \cong (n')_*^\sma (r')^* S^\alpha$ are homotopic.

And third, the two maps from $r^* n_*^\sma \const_A(E) \cong (n')_*^\sma (r')^* \const_A(E)$ to $\const_X(E)$ agree. This follows from the condition that the norm multiplication maps are stable under pullback.

%


The final thing to prove is that given an exponentiation diagram as in Section \ref{sec:moving-norm-map-transfer} the two maps
\[
 (n,\tilde{n})_*^\sma \circ (t,\tilde{t})_*^\vee,\  (t',\tilde{t}')_*^\vee \circ (n',\tilde{n}')_*^\sma \circ (r,\tilde{r})^*\colon \upis(E)(C,\xi) \to \upis(E)(B,\beta)
\]
agree.

The first composite is given by sending a natural transformation $\phi\colon S^\xi \to \const_C(E)$ to the composite
\begin{multline*}
 \hspace{1cm}S^\beta \xToo{\tilde{n}}  n_*^\sma S^\alpha \xToo{n_*^\sma PT(\tilde{t})}  n_*^\sma t_*^\vee S^\xi  \xToo{n_*^\sma t_*^\vee \phi} \\ n_*^\sma t_*^\vee \const_C(E) 
 \To  n_* \const_A(E) \xToo{\mu_A^B} \const_X(E)\hspace{1cm}
\end{multline*}
while the other sends $\phi$ to
\begin{multline*}
 S^\beta \xToo{PT(\tilde{t}')}  (t')_*^\vee S^\delta \xToo{(t')_*^\vee \tilde{n}'}  (t')_*^\vee (n')_*^\sma S^\epsilon \xToo{(t')_*^\vee (n')_*^\sma \tilde{r}} \\ (t')_*^\vee (n')_*^\sma r^* S^\xi  \xToo{(t')_*^\vee (n')_*^\sma r^* \phi}  (t')_*^\vee (n')_*^\sma r^* \const_C(E)\\ = (t')_*^\vee (n')_*^\sma \const_{\otherE}(E) \xToo{(t')_*^\vee \mu_{\otherE}^D}  (t')_* \const_D(E) \To \const_B(E)
\end{multline*}
Again it follows that these two composites are equal from three observations. First, the functors $n_*^\sma t_*^\vee S^\xi$ and $(t')_*^\vee (n')_*^\sma r^* S^\xi$ are naturally isomorphic, because the value on $b$ is given by
\[
 \bigwedge_{n(a)=b} \Big( \bigvee_{t(c)=a} S^{\xi(c)} \Big) \cong \bigvee_{t'(d)=b} \Big( \bigwedge_{n'(e)=d} S^{\xi(r(e))} \Big).
\]
Moreover, the two maps from $S^\beta$ are isomorphic.

Second, the functors $n_*^\sma t_*^\vee \const_C(E)$ and $(t')_*^\vee (n')_*^\sma r^* \const_C(E)$ are naturally isomorphic, and the two maps from $n_*^\sma t_*^\vee S^\xi \cong (t')_*^\vee (n')_*^\sma r^* S^\xi$ are homotopic.

And third, the two maps from $n_*^\sma t_*^\vee \const_C(E) \cong (t')_*^\vee (n')_*^\sma r^* \const_C(E)$ agree. This follows because for fixed $b \in B$ the map
\[
 \bigvee_{t'(d)=b} \Big( \bigwedge_{n'(e)=d} E \Big) \xToo{\mu_\otherE^D} \bigvee_{t'(d)=b} E
\]
is given on a wedge summand $d = (b,s)$ by $\mu_A^B$, by stability under pullback.


\end{proof}


\end{document}